\newtheorem{thm}{Theorem}[section]
\newtheorem{cor}[thm]{Corollary}
\newtheorem{prop}[thm]{Proposition}
\newtheorem{lem}[thm]{Lemma}
\theoremstyle{definition}
\newtheorem{defn}[thm]{Definition}
\newtheorem{exmp}[thm]{Example}
\theoremstyle{remark}
\newtheorem{rem}[thm]{Remark}
\let\c@equation\c@thm
\numberwithin{equation}{section}
\title{Smooth Euler-symmetric varieties generated by a single polynomial}
 \author[Cong~Ding]{Cong~Ding} 
 \address[Cong~Ding]
 {School of Mathematical Sciences\\ Shenzhen University\\Guangdong\\ China}
 \email{congding@szu.edu.cn}
 \author[Zhijun~Luo]{Zhijun~Luo}
 \address[Zhijun~Luo]
 {Center for complex geometry\\ Institute for Basic Science\\ 55 Expo-ro\\ Yuseong-gu\\ Daejeon\\ 34126\\ Republic of Korea.}
\email{luozj@amss.ac.cn \\ luozj@ibs.re.kr}
\begin{document}

\begin{abstract}
We classify smooth Euler-symmetric varieties corresponding to the symbol system generated by a single reduced polynomial.
\\\\
\noindent \textbf{Keywords.} Euler-symmetric variety, equivariant compactification, fundamental form, symbol system. \\\\
 \noindent \textbf{2020 MSC} 14M27.\\
\end{abstract}

\maketitle
	\section{Introduction}\label{intro}
	The notion of Euler-symmetric varieties was first introduced by Hwang and Fu in \cite{Fu2017EulersymmetricPV}. They are nondegenerate projective varieties admitting many $\mathbb{C}^*$-actions of Euler type. More precisely we have
	\begin{defn}
		Let $Z \subset \mathbb{P}V$ be a nondegenerate projective variety. For a nonsingular point $x \in Z$, a $\mathbb{C}^{*}$-action on $Z$ coming from a multiplicative subgroup of $\operatorname{GL}(V)$ is said to be of Euler type at $x$ if 
		\begin{enumerate}
			\item $x$ is an isolated fixed point of the restricted $\mathbb{C}^{*}$-action on $Z$;
			\item the induced $\mathbb{C}^{*}$-action on the tangent space $T_x(Z)$ is by scalar multiplication.
		\end{enumerate}
		We say that $Z \subset \mathbb{P}V$ is Euler-symmetric if for a general point $x \in Z$, there exists a $\mathbb{C}^{*}$-action on $Z$ of Euler type at $x$.
	\end{defn}
	There are many examples of Euler-symmetric varieties. For instance, Hermitian symmetric spaces under the equivariant projective embedding with respect to their automorphism groups, blow-up of $\mathbb{P}^n$ along a subvariety inside a hyperplane (where the projective embeddings have not been classified) are Euler-symmetric. More examples can be found in \cite{Fu2017EulersymmetricPV}. 
	Characterizing or classifying Euler-symmetric varieties under specific conditions presents a fascinating challenge.
	In \cite{Fu2017EulersymmetricPV}, the authors propose a conjecture that a Fano manifold with Picard number 1, realizable as an equivariant compactification of a vector group, can also be realized as an Euler-symmetric projective variety under a suitable projective embedding. This conjecture has been solved in \cite{Shafarevich23} under the assumption that the Fano manifold is toric. In \cite{Luo23}, the second author demonstrates that any Euler-symmetric complete intersection is necessarily a complete intersection of hyperquadrics.

	From \cite{Fu2017EulersymmetricPV}, Euler-symmetric varieties are determined by the symbol systems. We first recall
	\begin{defn}
		Let $W$ be a vector space. For $w\in W$, define
		$$\iota_w: \operatorname{Sym}^{k+1}W^* \to \operatorname{Sym}^{k}W^*, \, \varphi \mapsto \varphi(w, \cdot, \ldots, \cdot).$$
		For a subspace $F^k \subset \operatorname{Sym}^{k}W^*$ of symmetric $k$-linear forms on $W$, define its prolongation $\textbf{prolong}(F^k) \subset \operatorname{Sym}^{k+1}W^*$ by the following
		$$ \textbf{prolong}(F^k):= \bigcap_{w\in W}\iota_w^{-1}(F^k).$$
	\end{defn}
	Then a symbol system is defined as follows.
	\begin{defn}
		Let $W$ be a vector space. Fix a natural number $r$. A subspace
		$$
		\textbf{F} = \oplus_{k\geq 0}F^k \subset \operatorname{Sym}(W^*):= \oplus_{k\geq 0}\operatorname{Sym}^{k}W^*
		$$
		with
		$$
		F^0 = \mathbb{C} = \operatorname{Sym}^{0}W^*, \; F^1 = W^*,\; F^r \neq 0, \; \text{and}\; F^{r+i} = 0 \; \forall \; i \geq 1,
		$$
		is called a symbol system of rank $r$, if $F^{k+1} \subset \textbf{prolong}(F^k)$ for each $1\leq k \leq r$.
	\end{defn}
	One typical example of a symbol system comes from the system of fundamental forms of a non-degenerate projective variety. Moreover we know
	\begin{thm}[Classical result due to E. Cartan]\label{cartan}
		Let $Z \subset PV$ be a nondegenerate subvariety and let $x \in Z$ be \textbf{a general point}. Then the system of fundamental forms $\textbf{F}_x = \oplus_{k\geq 0}F_x^k$ is a symbol system of rank $r$ for some natural number $r\geq 1$.
	\end{thm}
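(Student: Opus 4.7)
The plan is to recover the fundamental forms from the osculating filtration, read off the defining conditions of a symbol system from elementary properties of that filtration, and establish the prolongation relation by differentiation in a local frame, using the hypothesis that $x$ is general to ensure the osculating spaces form a flag of vector sub-bundles in a neighborhood.

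Concretely, I would choose a smooth local parametrization $f\colon(\mathbb{C}^n,0)\to(Z,x)$ with $df_0$ identifying $\mathbb{C}^n$ with $W=T_xZ$, define the $k$-th osculating space $T_x^{(k)}Z$ as the span (in a fixed affine chart through $x$) of the partials $\partial^If(0)$ with $|I|\leq k$, take the $k$-th fundamental form as the induced surjection
\[
II^k\colon\operatorname{Sym}^kW\twoheadrightarrow T_x^{(k)}Z/T_x^{(k-1)}Z=:N_x^k,\qquad e_I\mapsto\partial^If(0)\bmod T_x^{(k-1)}Z,
\]
and set $F_x^k:=(\ker II^k)^{\perp}\subset\operatorname{Sym}^kW^*$. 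The conditions $F_x^0=\mathbb{C}$ and $F_x^1=W^*$ are then tautological, because $II^0$ and $II^1$ are isomorphisms, and nondegeneracy of $Z$ forces the increasing filtration to exhaust the ambient space at some smallest step $r\geq 1$, giving $F_x^r\neq 0$ and $F_x^{r+i}=0$ for $i\geq 1$.

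The heart of the matter is the prolongation $F_x^{k+1}\subset\textbf{prolong}(F_x^k)$, which by duality is equivalent to $w\cdot\ker II^k\subset\ker II^{k+1}$ for every $w\in W$. Here generality of $x$ enters decisively: after shrinking to a Zariski-open $U\subset Z$ on which every $\dim T_y^{(j)}Z$ is constant, each $T^{(j)}$ becomes a vector sub-bundle of the trivial $V$-bundle on $U$, locally framed by the partials $\partial^If(y)$ with $|I|\leq j$. Componentwise differentiation in $V$ along $w$ therefore sends sections of $T^{(k)}$ to sections of $T^{(k+1)}$: differentiating $\sum a_I(y)\partial^If(y)$ yields either coefficient-derivative terms that stay in $T^{(k)}\subset T^{(k+1)}$ or partials $a_I\,\partial^{I+e_w}f(y)$ of order at most $k+1$ that lie in $T^{(k+1)}$ by definition. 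Passing to the quotient defines a well-defined operator $\delta_w\colon N_x^k\to N_x^{k+1}$ (well-definedness uses that any pointwise vector in $T_x^{(k-1)}Z$ can be extended to a local section of the sub-bundle $T^{(k-1)}$, whose $w$-derivative then lies in $T^{(k)}$), and tracing the definitions gives $\delta_w\circ II^k=II^{k+1}\circ\mu_w$ where $\mu_w$ denotes multiplication by $w$. The desired inclusion $\mu_w(\ker II^k)\subset\ker II^{k+1}$ is then immediate.

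The main obstacle is precisely the well-definedness of $\delta_w$, which hinges on the osculating spaces forming a flag of sub-bundles in a neighborhood of $x$. This is where the assumption that $x$ is general is indispensable: at a non-general point, the ranks of the osculating maps can drop, the sub-bundle structure fails, and the extension-and-differentiation argument collapses, since one cannot differentiate a pointwise linear relation that does not extend over a neighborhood. Once the sub-bundle framework is in place, the rest of the verification reduces to routine bookkeeping in a local frame of partial derivatives.
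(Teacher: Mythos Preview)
The paper does not actually prove this statement: Theorem~\ref{cartan} is stated as a classical result due to E.~Cartan and is used as background, with no proof supplied. So there is no paper proof to compare against.

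That said, your sketch is correct and is essentially the standard argument. The key step---using generality of $x$ to guarantee that the osculating spaces $T^{(k)}$ have locally constant rank and hence form a flag of sub-bundles, so that differentiating a local section of $T^{(k-1)}$ lands in $T^{(k)}$ and the induced map $\delta_w\colon N_x^k\to N_x^{k+1}$ is well defined---is exactly the mechanism that makes the prolongation inequality work. Your dualization $F_x^{k+1}\subset\textbf{prolong}(F_x^k)\Longleftrightarrow w\cdot\ker II^k\subset\ker II^{k+1}$ is the right reformulation, and the commutation $\delta_w\circ II^k=II^{k+1}\circ\mu_w$ follows from the Leibniz rule as you indicate.

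It is worth noting that your argument is in fact the same idea underlying the Ein--Niu result the paper later invokes (Theorem~\ref{symbolsys}): there the constant-rank locus $\mathbf{U}_k$ is made explicit rather than hidden in the word ``general,'' and the conclusion is that the symbol-system property holds at every point of $\mathbf{U}_k$, not merely at a generic one. Your proof, read carefully, already establishes this sharper statement, since you only ever use that $x$ lies in the open set where all the relevant ranks are constant.
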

	
	\begin{defn}
		Given a symbol system $\textbf{F}$ of rank $r$, define a rational map
		$$
		\Phi_{\textbf{F}} : \mathbb{P}(\mathbb{C}\oplus W) \dashrightarrow \mathbb{P}(\mathbb{C}\oplus W \oplus (F^2)^*\oplus \cdots \oplus (F^r)^*),
		$$
		by
		$$
		[t:w] \mapsto [t^r: t^{r-1}w: t^{r-2}\iota^2_w:\cdots: t\, \iota^{r-1}_w:\iota^r_w].
		$$
		Write $V_{\textbf{F}}:=\mathbb{C} \oplus W \oplus (F^2)^*\oplus \cdots \oplus (F^r)^*$. We will denote the closure of the image of the rational map $\Phi_{\textbf{F}}$ by $M(\textbf{F}) \subset \mathbb{P}V_{\textbf{F}}$.
		We say the projective variety $M(\textbf{F})$ associated to the symbol system $\textbf{F}$ has rank $r$, denoted by $\operatorname{rank}(M(\textbf{F}))$. 
	\end{defn}
	
	The following theorem gives the relation between Euler-symmetric varieties and symbol systems.
	\begin{thm}\cite[Theorem 3.3]{Fu2017EulersymmetricPV}\label{fu20}
		Let $o= [1:0:\cdots:0]\in M(\textbf{F})$ be the point $\Phi_{\textbf{F}}([t=1: w=0])$. Then:
		\begin{enumerate}
			\item The natural action of the vector group $W$ on $\mathbb{P}(\mathbb{C}\oplus W)$ can be extended to an action of $W$ on $\mathbb{P}V_{\textbf{F}}$ preserving $M(\textbf{F})$ such that the orbit of $o$ is an open subset biregular to $W$.
			\item The $\mathbb{C}^{*}$-action on $W$ with weight $1$ induces a $\mathbb{C}^{*}$-action on $M(\textbf{F})$ of Euler type at $o$, making $M(\textbf{F})$ Euler-symmetric.
			\item The system of fundamental forms of $M(\textbf{F})\subset \mathbb{P}V_{\textbf{F}}$ at $o$ is isomorphic to the symbol system $\textbf{F}$.
		\end{enumerate}
		Conversely, any Euler-symmetric projective variety is of the form $M(\textbf{F})$ for some symbol system $\textbf{F}$ on a vector space $W$.
	\end{thm}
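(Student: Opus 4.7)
My plan is to establish the three direct claims in order and then recover the converse from the symbol system of an arbitrary Euler-symmetric variety. For (1), I would define for each $v\in W$ a linear endomorphism $T_v$ of $V_{\textbf{F}}=\mathbb{C}\oplus W\oplus (F^2)^*\oplus\cdots\oplus(F^r)^*$ whose $k$-th component is dictated by the binomial expansion
\[
\iota^k_{w+v}\;=\;\sum_{j=0}^{k}\binom{k}{j}\,\iota^{\,j}_v\circ \iota^{\,k-j}_w,
\]
where $\iota_v:(F^k)^*\to(F^{k+1})^*$ is the transpose of the contraction $F^{k+1}\to F^k$. This contraction is well-defined precisely because the prolongation inclusion $F^{k+1}\subset\textbf{prolong}(F^k)$ guarantees $\iota_v\phi\in F^k$ for every $\phi\in F^{k+1}$. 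A second pass through the same identity, using that $\iota_{v_1}$ and $\iota_{v_2}$ commute, shows $T_{v_1}T_{v_2}=T_{v_1+v_2}$, so $v\mapsto T_v$ gives a group homomorphism $W\to\operatorname{GL}(V_{\textbf{F}})$. By design $T_v\cdot \Phi_{\textbf{F}}([1{:}w])=\Phi_{\textbf{F}}([1{:}w+v])$, so $M(\textbf{F})$ is preserved and the orbit map $W\to W\cdot o$ is an open biregular embedding whose inverse reads off the $W$-component.

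For (2), I would extend the weight-one $\mathbb{C}^*$-action on $W$ to $V_{\textbf{F}}$ by assigning weight $k$ to the summand $(F^k)^*$; since $\iota^k_{\lambda w}=\lambda^k\iota^k_w$, this preserves $M(\textbf{F})$, fixes $o$ in isolation, and acts on $T_oM(\textbf{F})\cong W$ by scalar multiplication, certifying Euler type. For (3), the map $w\mapsto\iota^k_w$ is homogeneous of degree $k$ from $W$ to $(F^k)^*$, and its polarization is the natural contraction $\operatorname{Sym}^kW\to(F^k)^*$ dual to the inclusion $F^k\hookrightarrow\operatorname{Sym}^kW^*$. Reading this off as the $k$-th Taylor term of the local parametrization of $M(\textbf{F})$ at $o$ recovers exactly $F^k$ as the $k$-th fundamental form, and the prolongation inclusions then match the standard compatibility between consecutive fundamental forms.

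For the converse, given an Euler-symmetric $Z\subset\mathbb{P}V$ with Euler-type $\mathbb{C}^*$-action at a general $x$, I would set $W=T_xZ$ and let $\textbf{F}=\textbf{F}_x$ be the symbol system of fundamental forms provided by Theorem~\ref{cartan}. The Euler $\mathbb{C}^*$ splits $V$ into weight spaces $V_0\oplus V_1\oplus V_2\oplus\cdots$ with $x\in\mathbb{P}V_0$ and $V_1\cong W$, and comparing the tangential geometry of $Z$ at $x$ with the weight filtration forces $\dim V_k=\dim F^k$ for each $k\geq 2$. Integrating the infinitesimal $V_1$-action on $\mathbb{P}V$ yields a commuting family of vector fields tangent to $Z$, producing a $W$-action on $Z$ with $x$ in an open orbit. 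By (3) applied to both sides, this orbit is equivariantly identified with the open $W$-orbit in $M(\textbf{F})$, and passing to projective closure inside the canonical identification $\mathbb{P}V=\mathbb{P}V_{\textbf{F}}$ yields $Z=M(\textbf{F})$.

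The step I expect to be the main obstacle is the construction of the vector-group action in the converse: extracting a full $(W,+)$-action preserving $Z$ from a single Euler-type $\mathbb{C}^*$ requires showing that the weight-one vector fields on $\mathbb{P}V$ integrate, commute pairwise, and remain tangent to $Z$, and this is precisely the point where the prolongation property of the fundamental forms is used in an essential way.
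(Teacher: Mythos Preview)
The paper does not actually prove this theorem: it is quoted verbatim from \cite[Theorem~3.3]{Fu2017EulersymmetricPV}, and the only content the present paper adds is Remark~\ref{explicitaction}, which records the explicit formulas for the $W$- and $\mathbb{C}^*$-actions on $\mathbb{P}V_{\textbf{F}}$. Your treatment of items (1)--(3) is correct and in fact reproduces exactly those formulas: your binomial expansion of $\iota^k_{w+v}$ is the formula for $g_v^x\cdot f^k$ in Remark~\ref{explicitaction}, the prolongation condition is precisely what is needed for the contraction $\iota_v:F^{k+1}\to F^k$ to land in $F^k$, and your weight assignment on $(F^k)^*$ is the displayed $\mathbb{C}^*$-action there. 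So for the direct part there is nothing to compare against, and your argument is the standard one.

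For the converse, however, there is a genuine gap. You attempt to produce the $W$-action from a \emph{single} Euler-type $\mathbb{C}^*$ at $x$ by ``integrating the infinitesimal $V_1$-action on $\mathbb{P}V$''. But a weight decomposition $V=\bigoplus V_k$ for one $\mathbb{C}^*\subset\operatorname{GL}(V)$ gives you no action of the vector space $V_1$ on anything: $V_1$ is merely a subspace of $V$, not a subalgebra of $\mathfrak{gl}(V)$, and there are no ``weight-one vector fields'' to integrate without further input. The definition of Euler-symmetric supplies that input: it provides a $\mathbb{C}^*$-action of Euler type at \emph{every} general point of $Z$, not just one. In the original proof in \cite{Fu2017EulersymmetricPV} this family of one-parameter subgroups is what generates, inside the linear automorphism group of $Z$, a subgroup containing a copy of the vector group $W$ acting with an open orbit. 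Your sketch never invokes the ``many $\mathbb{C}^*$'s'' hypothesis, and without it the step you flag as the main obstacle cannot be carried out.
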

	\begin{rem}\label{explicitaction}
		The action of $W$ on $\mathbb{P}V_{\mathbf{F}}$ coming from
		the natural action of the vector group $W$ on $\mathbb{P}(\mathbb{C}\oplus W)$ can be explicitly written as follows. For $v \in W$ and $x = [t:w:f^2:\cdots:f^r]$,
		\[
		g_v\cdot x = [t:w+tv:g_v^x\cdot f^2:\cdots:g_v^x\cdot f^r],
		\]
		where for each $2\leq k \leq r$,
		\[
		g_v^x\cdot f^k = \sum_{\ell = 2}^k C_k^{\ell} f^{\ell}\circ \iota_v^{k-\ell} + k\iota_w\circ \iota_v^{k-1} + t\iota_v^k,
		\]
		denoting the binomial coefficients by $C^\ell_k$.
		
		The action of $\mathbb{C}^{*}$ on $\mathbb{P}V_{\mathbf{F}}$ that induced by the $\mathbb{C}^{*}$-action on $W$ with weight $1$ can be explicitly written as follows. 
		For $\lambda \in \mathbb{C}^*$ and $x = [t:w:f^2:\cdots:f^r]$,
		\[
		\lambda\cdot x = [t:\lambda w:\lambda^2 f^2:\cdots:\lambda^r f^r].
		\]
	\end{rem}
	\begin{defn}
		Let $U$ be the open orbit biregular to $W$, and the boundary divisor $D = M(\textbf{F})\backslash U$.
	\end{defn}
	
	In this article we are going to characterize certain special Euler-symmetric varieties associated to symbol systems generated by a single reduced homogeneous polynomial. 
	\begin{defn}
		Given a vector space $W$. Let $P \in \operatorname{Sym}^rW^*$ be a homogeneous polynomial of degree $r$. The associated symbol system $\textbf{F}_P := \oplus_{i=0}^r F^i_P$ is defined as follows:
		\[
		F^2_P = \langle d^{r-2}P\rangle, \cdots, F^k_P = \langle d^{r-k}P\rangle, \cdots, F^r_P = \langle P\rangle.
		\]
		In this case, we denote $V_P := V_{\textbf{F}_P}$, and the Euler-symmetric variety associated to this symbol system is denoted by $M_P\subset \mathbb{P}V_P$, correspondingly the rational map is denoted by $\Phi_P$.
	\end{defn}
	
	Our main result is the following. 
	\begin{thm}\label{mainthm}
		Given a vector space $W$. Let $P \in \operatorname{Sym}^rW^*$ be a reduced homogeneous polynomial of degree $r$. If the Euler-symmetric variety $M_P$ is smooth, then $M_P$ is a homogeneous projective variety.
		
		Moreover, $M_P = X_1\times\cdots\times X_k$ where $k$ is the number of irreducible components of $P$, and each $X_i$ is one of the following irreducible Hermitian symmetric spaces, and the embedding $M_P \subset \mathbb{P}V_P$ is given by the line bundle $\mathcal{O}(1,\cdots, 1)$.
		\begin{enumerate}
			\item the Grassmannian variety $Gr(n,2n)$;
			\item the Spinor variety $\mathbb{S}_n$ with $n$ being even;
			\item the Lagrangian Grassmannian $\operatorname{LG}(n,2n)$;
			\item the hyperquadric $\mathbb{Q}^n$;
			\item the $27$-dimensional $E_7/P_7$.
		\end{enumerate}
		Conversely, for any $X = X_1\times\cdots\times X_k \subset\mathbb{P}^N$ such that $X_i$ is one of the above irreducible Hermitian symmetric spaces with the embedding in $\mathbb{P}^N$ given by the line bundle $\mathcal{O}(1,\cdots, 1)$, there exists a reduced homogeneous polynomial $P$ such that $(X\subset \mathbb{P}^N) \cong (M_P\subset \mathbb{P}V_P)$, up to projective isomorphism.
	\end{thm}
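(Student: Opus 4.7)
My plan is to attack the forward direction in two stages: first reduce to the case of an irreducible polynomial $P$, then classify that case by upgrading smoothness to (pre)homogeneity. The converse is a case-by-case verification.

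For the reduction, write $P = P_1\cdots P_k$ as the irreducible factorization (distinct factors since $P$ is reduced). The key step is to show that, under the smoothness hypothesis, the variables of the $P_i$ may be separated: $W = W_1\oplus\cdots\oplus W_k$ with $P_i\in\operatorname{Sym}^{r_i}W_i^*$. To see this, I would analyze the boundary divisor $D = M_P\setminus U$: its irreducible components should correspond to the factors $P_i$, and if two factors shared a variable then the corresponding boundary components would fail to meet transversally in $M_P$, forcing a singular locus. Once the splitting is in place, the Leibniz rule gives
\[
F^\ell_P \cong \bigoplus_{\ell_1+\cdots+\ell_k = \ell}F^{\ell_1}_{P_1}\otimes\cdots\otimes F^{\ell_k}_{P_k},
\]
so that $V_P \cong V_{P_1}\otimes\cdots\otimes V_{P_k}$ and $\Phi_P$ factors as $\Phi_{P_1}\times\cdots\times\Phi_{P_k}$ composed with the Segre embedding. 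Thus $M_P\cong M_{P_1}\times\cdots\times M_{P_k}$, polarized by $\mathcal{O}(1,\ldots,1)$ as required.

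For the irreducible case, assume $P$ is irreducible and $M_P$ is smooth. The isotropy $G_o\subset\operatorname{Aut}(M_P)$ at $o=[1:0:\cdots:0]$ preserves the symbol system $\mathbf{F}_P$ by Theorems~\ref{cartan} and~\ref{fu20}, hence stabilises the line $\langle P\rangle\subset\operatorname{Sym}^rW^*$, so $G_o\subset\operatorname{GL}(W)_{[P]}$. Together with the vector group $W$, this produces an algebraic group $W\rtimes G_o$ acting on $M_P$ with open orbit $U$. The crucial point is to upgrade smoothness into transitivity of $G_o$ on the regular locus $\{P=0\}^{\mathrm{reg}}\subset W$, that is, to show $(W,G_o,P)$ is a regular prehomogeneous vector space with $P$ as relative invariant. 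Granting this, one identifies $(W,P)$ with the complexification of a simple Euclidean Jordan algebra with $P$ its generic norm, and then the Jordan-algebraic classification---equivalently, the classification of irreducible tube-type Hermitian symmetric spaces---pins down the five cases listed.

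The main obstacle is precisely this promotion of smoothness to prehomogeneity. My approach is a careful local analysis of $M_P$ along the boundary $D$ using the explicit $W$-action in Remark~\ref{explicitaction}: at any point of $D$ where $G_o$ fails to have an open orbit, one expects the Zariski tangent space of $M_P$ to drop in dimension, contradicting smoothness. For the converse, given $X = X_1\times\cdots\times X_k$ with each $X_i$ on the list embedded by $\mathcal{O}(1,\ldots,1)$, I would take $P_i$ to be the classical Jordan norm on the unipotent radical of the stabilising maximal parabolic---respectively, a determinant, a Pfaffian, the determinant of a symmetric matrix, a nondegenerate quadratic form, and the Cartan cubic on $\mathbb{C}^{27}$---and verify that the fundamental-form system of $X_i$ at a general point is generated by $P_i$. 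Setting $P = P_1\cdots P_k$ then yields $(X\subset\mathbb{P}^N)\cong (M_P\subset\mathbb{P}V_P)$ by Theorem~\ref{fu20}(3).
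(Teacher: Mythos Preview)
Your proposal has two genuine gaps, and the paper's route is quite different from yours.

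\textbf{Gap 1: the variable-separation step.} You want to show that if $P = P_1\cdots P_k$ with distinct irreducible factors then smoothness of $M_P$ forces a splitting $W = W_1\oplus\cdots\oplus W_k$ with $P_i\in\operatorname{Sym}^{r_i}W_i^*$. Your argument is that shared variables would make the corresponding boundary components of $D$ meet non-transversally. But you have not established that $D$ is simple normal crossings to begin with (this is not automatic for smooth equivariant compactifications of vector groups), nor have you explained why shared variables translate into a failure of transversality at the level of $M_P$. This step is not carried out in the paper at all; the paper proves homogeneity first, without any preliminary reduction to irreducible $P$, and the product decomposition is read off only afterward from the structure theory of Hermitian symmetric spaces.

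\textbf{Gap 2: the prehomogeneity step.} In the irreducible case you want to show that $(W,G_o,P)$ is a regular prehomogeneous vector space. You correctly identify this as the main obstacle, but your proposed mechanism---that at a boundary point where $G_o$ lacks an open orbit the Zariski tangent space of $M_P$ would drop---is a heuristic, not an argument. There is no direct link between the failure of $G_o$-transitivity on a stratum of $\{P=0\}$ and a jump in the tangent space of $M_P$ along $D$; the local structure of $M_P$ near $D$ is governed by the full symbol system $\mathbf{F}_P$, not just by the $G_o$-orbit structure of $V(P)$.

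\textbf{What the paper does instead.} The paper's key idea is to work at the \emph{terminal point} $z=[0:\cdots:0:1]$ rather than at $o$. It first shows (Propositions~\ref{smhomal} and~\ref{symmetricFund}) that smoothness forces $P$ to be homaloidal and that the fundamental-form system $\mathbf{F}_z$ at $z$ has the same graded dimensions as $\mathbf{F}_P$. The crucial step is then to invoke the Ein--Niu theorem (Theorem~\ref{symbolsys}) together with this dimension equality to conclude that $\mathbf{F}_z$ is itself a symbol system, even though $z$ is not a priori general. This yields a second vector-group action, by $W^*$, with an open orbit $U_z$ around $z$. Since every irreducible component of $D$ contains $z$ (Proposition~\ref{DivisorTerminal}), the two actions together sweep out all of $M_P$, giving homogeneity. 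The identification of the factors then comes from the general fact that a rational homogeneous equivariant compactification of a vector group is Hermitian symmetric, together with the non-dual-defectiveness of $M_1$ (Proposition~\ref{dualdef}), which rules out the non-tube-type factors. No prehomogeneous-vector-space or Jordan-algebra classification is invoked directly.
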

	
	The key point of the proof lies in demonstrating that the system of fundamental forms at the terminal point $z=[0,\cdots, 0, 1]$ constitutes a symbol system. 
	This presents a challenge because Cartan's Theorem, which holds on general points, does not readily provide this information. 
	Algebraically, Proposition \ref{derivative} allows us to easily obtain only the information of the $(r-1)$-th fundamental form. This form is precisely captured by the derivatives of $P_*$, the so-called multiplicative Legendre transform of $P$ (Definition \ref{multilegen}).
	However, it is not easy to check that the system of fundamental forms of $M_P$ at $z$ is exactly coincident with the symbol system generated by $P_*$. Instead of this tedious algebraic verification, we use a theorem recently proved by Lawrence Ein and Wenbo Niu (see \cite[Theorem 3.3]{ein2023vanishing}). This theorem establishes that the rank equality between the system of fundamental forms and the symbol system generated by $P$ (see Proposition \ref{symmetricFund}) will imply that the system of fundamental forms at $z$ is a symbol system. Consequently, the existence of another vector group action on $M_P$ is guaranteed, with its orbit through $z$ being biregular to the vector group.
	
	The organization of the rest of this paper is as follows: In \S2 we discuss the homaloidal polynomial and its relation with the smoothness of the Euler symmetric variety $M_P$, which is sufficient to obtain the rank equality. In \S3 we discuss some properties on the fixed point components of $M_P$ in the Bia{\l}ynicki-Birula decomposition and the boundary divisor. In \S4 we finish the proof of our main result (Theorem \ref{mainthm}).
	\section{EKP-Homaloidal polynomials and smoothness of Euler-symmetric varieties}
	We first recall 
	\begin{defn}
		Let $P \in \operatorname{Sym}^rW^*$ be a homogeneous polynomial. We say $P$ is a homaloidal polynomial, if the induced derivative map
		\[
		P' : \mathbb{P}W \to \mathbb{P}W^*, w \mapsto (d P)_w,
		\]
		is birational map.
	\end{defn}
	
	We list some cubic homaloidal polynomials here.
	\begin{exmp}\label{exam:cubichomaloidal}
		\begin{enumerate}
			\item $W = \mathbb{C}$, $P = x^3$;
			\item $W = \mathbb{C}^m\oplus \mathbb{C}$, $P = Q(v)x$, where $Q$ is any non-degenerate quadratic form on $\mathbb{C}^m$;
			\item $W = \mathbb{C}^3$, $P = x_0x_1x_2$;
			\item $W$ is the space of $3$ by $3$ symmetric matrices, $P$ is the determinant function;
			\item $W = \operatorname{Mat}_3(\mathbb{C})$, $P$ is the determinant function;
			\item $W = \Lambda^2(\mathbb{C}^6)$, $P$ is the Pffafian polynomial;
			\item $W = \operatorname{Mat}_3(\mathbb{C}) \times \operatorname{Mat}_3(\mathbb{C}) \times \operatorname{Mat}_3(\mathbb{C})$, the polynomial $P$ is the Cartan cubic $P(A, B, C) = |A| + |B| + |C| - \operatorname{Tr}(ABC)$.
		\end{enumerate}
		Moreover, the last four examples correspond to irreducible Hermitian symmetric spaces listed in Theorem \ref{mainthm} with $\operatorname{rank}=3$.
	\end{exmp}
	
	\begin{prop}\label{symmetrichomo}
		Let $P$ be a homogeneous polynomial, then
		\[
		\operatorname{dim}(F^j_P) = \operatorname{dim}(F^{r-j}_P), \forall j\neq 1.
		\]
		Moreover, if $P$ is homaloidal, then
		\[
		\operatorname{dim}(F^j_P) = \operatorname{dim}(F^{r-j}_P), \forall j.
		\]
	\end{prop}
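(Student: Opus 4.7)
The plan is a duality argument based on viewing $P$ as a symmetric tensor. For each $0\leq j\leq r$, contraction against symmetric tensors in $W$ yields a linear map
\[
\phi_j : \operatorname{Sym}^{r-j}W \longrightarrow \operatorname{Sym}^{j}W^{*}, \qquad v_1\cdots v_{r-j}\longmapsto \iota_{v_1}\cdots\iota_{v_{r-j}}P.
\]
By polarization (and up to nonzero combinatorial constants), the image of $\phi_j$ is precisely $\langle d^{r-j}P\rangle$. Hence it coincides with $F^j_P$ for $j\geq 2$, and equals $\mathbb{C}=F^0_P$ when $j=0$.

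The crucial observation is that $P$ induces the bilinear pairing
\[
B_j: \operatorname{Sym}^{j}W \times \operatorname{Sym}^{r-j}W \longrightarrow \mathbb{C}, \qquad (u,v)\longmapsto (\iota_u\iota_v)(P),
\]
and $\phi_j$, $\phi_{r-j}$ are nothing but the two linearizations of $B_j$. After canonically identifying $\operatorname{Sym}^{k}W^*$ with $(\operatorname{Sym}^{k}W)^*$, the maps $\phi_j$ and $\phi_{r-j}$ are transposes of each other, and therefore have equal rank. This yields
\[
\dim F^{j}_P=\operatorname{rank}(\phi_j)=\operatorname{rank}(\phi_{r-j})=\dim F^{r-j}_P
\]
whenever both sides are given by the natural definition in terms of derivatives of $P$; this covers all $j$ with $j\neq 1$ and $r-j\neq 1$, together with the trivial endpoints $j=0,r$, which settles the first assertion.

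For the remaining case $j=1$ (equivalently $j=r-1$), the symbol system convention $F^1_P=W^*$ forces $\dim F^1_P=\dim W$, so one must show $\dim\langle dP\rangle=\dim W$, i.e.\ that the partial derivatives $\partial P/\partial x_i$ are linearly independent. If $P$ is homaloidal, then $P':\mathbb{P}W\dashrightarrow\mathbb{P}W^*$ is birational, in particular dominant; any nontrivial linear relation $\sum c_i\partial_iP\equiv 0$ would confine the image of $P'$ to the hyperplane $\{\sum c_iy_i=0\}\subset \mathbb{P}W^*$, contradicting dominance. Hence $\dim F^{r-1}_P=\dim W=\dim F^{1}_P$, completing the proof.

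The only point requiring genuine care is the polarization identification between $\operatorname{image}(\phi_j)$ and $\langle d^{r-j}P\rangle$, keeping track of the factorial factors produced when passing between iterated contractions and iterated partial differentiation; once that identification is in place, everything reduces to the elementary fact that a linear map and its transpose have equal rank.
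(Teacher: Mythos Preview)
Your argument is correct and follows the same overall strategy as the paper: both introduce the contraction maps $\operatorname{Sym}^{r-j}W\to\operatorname{Sym}^{j}W^{*}$ (the paper calls them $P_{r-j,j}$, you call them $\phi_j$), identify their images with $F^{j}_P$ for $j\neq 1$, and deduce the dimension symmetry from the rank equality $\operatorname{rank}\phi_j=\operatorname{rank}\phi_{r-j}$; the homaloidal case is then handled identically.

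The one genuine difference is in how the rank equality itself is justified. The paper writes $P$ out in coordinates and constructs by hand an explicit linear bijection $\Phi:\operatorname{Im}(P_{j,r-j})\to\operatorname{Im}(P_{r-j,j})$ swapping the multi-index factorials. Your route is the more conceptual one: you recognize that $\phi_j$ and $\phi_{r-j}$ are the two linearizations of the single bilinear form $B_j(u,v)=\iota_u\iota_vP$ and hence are transposes of one another once $\operatorname{Sym}^kW^{*}$ is identified with $(\operatorname{Sym}^kW)^{*}$, so the rank equality is immediate from elementary linear algebra. This buys you a coordinate-free argument and avoids the bookkeeping of factorial coefficients; the paper's explicit formula, on the other hand, makes the bijection completely concrete. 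Either way the content is the same duality, just packaged differently.
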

	\begin{proof}
		Let $\operatorname{Sym}^dW$ be the vector space of degree $d$ homogeneous differential operator with constant coefficients. Consider the follow $j$-th partial derivative map of $P$:
		\[
		P_{j,r-j}:\operatorname{Sym}^jW\to \operatorname{Sym}^{r-j}W^*; \; D \mapsto D(P).
		\]
		By the definition of the symbol system $\textbf{F}_P$, we have that $\operatorname{Im}(P_{j, r-j}) = F_P^{r-j}$ for $j\neq r-1$. 
		We claim that 
		\[
		\operatorname{rank}(P_{j, r-j}) = \operatorname{rank}(P_{r-j,j}).
		\]
		This was mentioned in \cite{gesmundo2019explicit} without explicit proof.
		To prove the claim, we write 
		\[P=\sum_{\substack{I=(a_1,\cdots, a_m), a_i\geq 0 \\ a_1+\cdots +a_m=r}}A_I\dfrac{1}{a_1!\cdots a_m!}x^{a_1}_1\cdots x^{a_m}_m.\]
		For $D^j\in \operatorname{Sym}^jW$, it can be written as 
		\[D^j=\sum_{\substack{J=(b_1,\cdots, b_m), b_i\geq 0 \\ b_1+\cdots +b_m=j}}D^j_J\dfrac{\partial^j}{\partial x^{b_1}_1\cdots \partial x^{b_m}_m}.\]
		Then we have
		\[D^j(P)=\sum_{I,J, a_i\geq b_i}A_ID^j_J\dfrac{1}{(a_1-b_1)!\cdots (a_m-b_m)!}x^{a_1-b_1}_1\cdots x^{a_m-b_m}_m.\]
		Define $\Phi: \operatorname{Im}(P_{j, r-j})\rightarrow \operatorname{Im}(P_{r-j, j}) $ as
		\[\Phi(D^j(P))=\sum_{I,J, a_i\geq b_i}A_ID^j_J\dfrac{1}{(b_1)!\cdots (b_m)!}x^{b_1}_1\cdots x^{b_m}_m.\]
		It is easy to see that $\Phi$ is a bijection and hence the claim holds. Therefore, $\operatorname{dim}(F^j_P) = \operatorname{dim}(F^{r-j}_P)$ for any $j\neq 1$.
		
		If $P$ is homaloidal, then $\operatorname{dim}(F^{r-1}_P) = m = \operatorname{dim}(W^*)$.
	\end{proof}
	We fix some conventions here.
	Consider the $W$-action on $M_{\mathbf{F}}$ as stated in Theorem \ref{fu20}. For a vector $v\in W$, denoting the corresponding group element by $g_v$, the limiting point
	\[
	\lim_{t\to 0} g_{t^{-1}v}\cdot o \in M_{\mathbf{F}}
	\]
	is called the terminal point of the rational curve $\overline{\{g_{tv}\cdot o \mid t\in \mathbb{C} \}}\subset M_P$, and $o$ is called the original point. If $v$ is a general vector, the limiting points are called the terminal points of the $W$-action.
	
	Homaloidal polynomials are closely related to the smoothness of $M_P$. More precisely we have
	\begin{prop}\label{smhomal}
		If the Euler-symmetric variety $M_P$ is smooth, then $P$ is a homaloidal polynomial.
	\end{prop}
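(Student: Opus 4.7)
The plan is to show that smoothness of $M_P$ at the terminal point $z=[0:\cdots:0:1]\in M_P$ (where the last coordinate is identified with the dual of $P\in F^r_P$) forces the polar map $P':\mathbb{P}W\dashrightarrow\mathbb{P}W^*$ to be birational, by analyzing the projectivized tangent cone of $M_P$ at $z$.

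In the affine chart of $\mathbb{P}V_P$ where the last coordinate is nonzero (so $z$ is the origin), I parametrize $M_P$ near $z$ by substituting $w=s^{-1}u$ into $\Phi_P$, for $s\in\mathbb{C}$ small and $u\in W$ with $P(u)\neq 0$. The affine coordinates become
\[
B_k(s,u) \;=\; s^{r-k}\,\iota_u^k/\iota_u^r \;\in\; (F^k_P)^*,\qquad 0\le k\le r-1.
\]
Since $B_k$ vanishes to order $r-k\geq 2$ in $s$ for every $k\leq r-2$, while $B_{r-1}=s\,\iota_u^{r-1}/\iota_u^r$ is of order exactly $1$, the tangent cone of $M_P$ at $z$ lies in the linear subspace $(F^{r-1}_P)^*\subset T_z\mathbb{P}V_P$, and projectively equals the closure of the image of
\[
\pi:\mathbb{P}W\dashrightarrow\mathbb{P}((F^{r-1}_P)^*),\qquad [u]\mapsto[\iota_u^{r-1}].
\]

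Choosing the basis $\partial_1 P,\ldots,\partial_m P$ of $F^{r-1}_P$ identifies $\iota_u^{r-1}$ with the tuple $((\partial_1 P)(u),\ldots,(\partial_m P)(u))$ up to a combinatorial scalar, so $\pi$ agrees with the polar map $P'$ after the natural identification $(F^{r-1}_P)^*\cong W^*$ (valid when the partials are linearly independent). Smoothness of $M_P$ at $z$ now forces the tangent cone to be reduced, linear, and of dimension $m=\dim W$; combined with the inclusion into $(F^{r-1}_P)^*$ of dimension $\leq m$, this gives $\dim F^{r-1}_P=m$ (so the partials \emph{are} linearly independent and the identification above is genuine) and identifies the projectivized tangent cone with all of $\mathbb{P}W^*$ as a reduced scheme.

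Finally, as a cycle in $\mathbb{P}W^*$ the projectivized tangent cone is the image of $\pi=P'$ weighted by the mapping degree: its degree equals $\deg(P')\cdot\deg(\overline{\operatorname{im}\,P'})$, whereas smoothness gives $\operatorname{mult}_z M_P=1$. Therefore both factors equal $1$; that is, $P'$ is dominant onto $\mathbb{P}W^*$ with generic fibre of size $1$, i.e., birational, and $P$ is homaloidal. The main point to justify carefully is this multiplicity identity; I expect to handle it by pulling back $\Phi_P$ through the blow-up of $\mathbb{P}V_P$ at $z$: the proper transform of the hyperplane $\{t=0\}\subset\mathbb{P}(\mathbb{C}\oplus W)$ maps onto the exceptional divisor through precisely $\pi$, and the projection formula then yields the required degree equality.
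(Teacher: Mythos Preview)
Your setup is right and matches the paper's: both identify the tangent cone of $M_P$ at $z$ as lying in $(F_P^{r-1})^*$ and being swept out by the directions $[dP(u)]$. But your decisive step is different, and as written it has a gap.

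\textbf{The gap.} You assert that the projectivized tangent cone, as a cycle, equals $\deg(P')\cdot[\overline{\operatorname{im}P'}]$, so that smoothness ($\operatorname{mult}_z M_P=1$) forces $\deg(P')=1$. This identity is not obvious and your proposed blow-up argument does not establish it. Concretely: after blowing up $z$, the proper transform of $H=\{t=0\}$ maps into the exceptional divisor $E_z\cong\mathbb{P}^{m-1}$ via $P'$, but you have not shown it maps \emph{onto} $E_z$; a priori the image could be a proper subvariety (that it linearly spans does not force equality). Even granting surjectivity, a ``projection formula'' does not directly compare the pushforward of $[H]$ with the tangent-cone cycle of $M_P$: the lifted map $\mathbb{P}^m\dashrightarrow\operatorname{Bl}_zM_P$ is only rational, and exceptional divisors over the base locus $V(P)\subset H$ could in principle also dominate $E_z$. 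One can close this gap (for instance by checking that the divisorial valuations $v_H$ on $\mathbb{P}^m$ and $v_{E_z}$ on $\operatorname{Bl}_zM_P$ agree on all the coordinate functions $B_k$, hence coincide as valuations on the common function field), but that is more work than your outline suggests.

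\textbf{The paper's route.} The paper bypasses all of this with a direct local-injectivity argument. Once $T_zM_P$ is identified with the linear space $L=\{[0:\cdots:0:*:*]\}$ (the paper obtains $\dim F_P^{r-1}=m$ from non-degeneracy of $M_P$ rather than from your dimension squeeze, but either works), the tangential projection
\[
\psi:M_P\dashrightarrow L,\qquad [t^r:t^{r-1}w:\cdots:t\,dP(w):P(w)]\longmapsto[t\,dP(w):P(w)]
\]
is a local analytic isomorphism at the smooth point $z$. If $P'$ were not generically injective one could pick non-proportional $w_1,w_2$ with $P(w_i)\neq 0$ and $dP(w_1)/P(w_1)=dP(w_2)/P(w_2)$; then for small $t$ the two distinct points $\Phi_P([t:w_i])$ lie near $z$ but have the same image under $\psi$, contradicting local injectivity. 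This yields $\deg(P')=1$ immediately, with no blow-ups, pushforwards, or multiplicity computations. Your tangent-cone picture is morally the same statement, but the paper's phrasing turns it into a two-line contradiction rather than an intersection-theoretic identity that still needs proof.
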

	\begin{proof}
		Consider the terminal point $z = [0:\cdots:0:1]\in M_P\subset \mathbb{P}V_{P}$. Firstly, for any point $w \in W$ such that $P(w)\neq 0$, we have that the projective tangent cone at $z$ contains the direction $[0:\cdots:0:d P(w):0]$. 
		We claim that the the map $w\mapsto d P(w)$ is not degenerate, namely the image is not contained in a hyperplane. 
		If not, by choosing a suitable coordinate, we can assume that $\partial P/\partial w_m \equiv 0$. Hence $M_P$ is degenerate. It contradicts with the non-degeneracy of Euler-symmetric projective varieties.
		Therefore, the projective tangent cone must contain the projective subspace $L = \{[0:\cdots:0:*:*]\}$ of dimension $m$. 
		
		Consider the tangential projection of $M_P$ from $z$:
		\begin{align*}
			\psi : M_P &\dashrightarrow L \\
			[t^r:t^{r-1}w:\cdots:td P(w): P(w)]& \mapsto [td P(w): P(w)].
		\end{align*}
		Since the Euler-symmetric variety $M_P$ is smooth, the projective subspace $L$ coincides with projective tangent space of $M_P$ at point $z$. Then $\psi$ must be a local isomorphism in the complex topology. That is, the map $\psi$ must be a local isomorphism at $z$, in particular it must be locally injective.
		
		On the other hand, suppose that we can find two non-colinear vectors $w_1$ and $w_2$ with $P(w_1),P(w_2)\neq 0$ such that the vectors $d P(w_1)$ and $d P(w_2)$ are colinear. 
		After multiplying them by a suitable constant, we may suppose that $P(w_1)^{-1}d P(w_1) = P(w_2)^{-1}d P(w_2)$. 
		Then for $t$ small enough, $[t^r: t^{r-1}w_1: \cdots: td P(w_1):P(w_1)]$ and $[t^r: t^{r-1}w_2: \cdots: td P(w_2):P(w_2)]$ are two distinct points in $M_P$ close to $z$.
		But $[td P(w_1):P(w_1)]$ and $[td P(w_2):P(w_2)]$ coincide, a contradiction with the local injectivity of $\psi$. 
		We conclude that the rational map $[w]\in \mathbb{P}W \mapsto [d P(w)]\in \mathbb{P}W^*$ must be injective on the open subset $P(w) \neq 0$. Hence, the polynomial $P$ is homaloidal.
	\end{proof}
	
	Since $M_P$ is smooth, 
	we consider the system of fundamental forms at $z$, which is denoted by $\textbf{F}_z = \oplus_{k=0}^{r_z}F^k_z$. Note that we can not use Theorem \ref{cartan} to deduce that $\textbf{F}_z$ is a symbol system directly since $z$ might not be a general point. However, the dimensions match as follows. 
	\begin{prop}\label{symmetricFund}
		$r_z=r$ and 
		$\operatorname{dim}(F_z^k) = \operatorname{dim}(F_P^{r-k}) = \operatorname{dim}(F_P^k)$, for any $ 0\leq k\leq r$.
	\end{prop}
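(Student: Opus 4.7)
The plan is to determine the full osculating flag of $M_P$ at $z$ explicitly, pairing a lower bound from concrete rational curves through $z$ with a matching upper bound from the weight decomposition induced by the Euler $\mathbb{C}^*$-action of Remark \ref{explicitaction}. Throughout I use that $P$ is homaloidal (Proposition \ref{smhomal}), hence $\dim F_P^{r-1} = m$ by Proposition \ref{symmetrichomo}.

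The first step is to identify the tangent space. For $w\in W$ with $P(w)\neq 0$, the rational curve $\gamma_w(t):=\Phi_P([t:w])$ tends to $z$ as $t\to 0$, and a direct Taylor expansion in the affine chart at $z$ (obtained by dividing by the last coordinate of $V_P$) shows that $\gamma_w^{(k)}(0)$ lives in the factor $(F_P^{r-k})^*\subset V_P$ and is a nonzero scalar multiple of $\iota_w^{r-k}$. In particular $\gamma_w'(0)\in(F_P^{r-1})^*$. Since elements of $F_P^{r-1}\subset\operatorname{Sym}^{r-1}W^*$ are polynomials on $W$ and are therefore determined by their values, the functionals $\iota_w^{r-1}$ span all of $(F_P^{r-1})^*$ as $w$ varies; combined with $\dim T_z M_P=m$ from smoothness and $\dim(F_P^{r-1})^*=m$ from the homaloidal property, this forces $T_z M_P=(F_P^{r-1})^*$. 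The same evaluation argument applied to $\iota_w^{r-k}$ shows that the contributions $\gamma_w^{(k)}(0)$ together span $(F_P^{r-k})^*$, yielding the lower bound $T_z^{(k)}M_P\supseteq\bigoplus_{j=0}^{k}(F_P^{r-j})^*$ on the $k$-th osculating space.

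For the matching upper bound, I use that the $\mathbb{C}^*$-action of Remark \ref{explicitaction} fixes $z$ and decomposes $T_z\mathbb{P}V_P$ into weight subspaces, with the weight $-j$ piece being exactly $(F_P^{r-j})^*$ for $1\leq j\leq r$. Since the previous step shows $T_z M_P$ is pure of weight $-1$, the $k$-th fundamental form is a $\mathbb{C}^*$-equivariant surjection $\operatorname{Sym}^k T_z M_P \twoheadrightarrow T_z^{(k)}M_P/T_z^{(k-1)}M_P$ from a space of pure weight $-k$, so the successive quotient embeds into $(F_P^{r-k})^*$. An induction on $k$ then gives $T_z^{(k)}M_P\subseteq\bigoplus_{j=0}^{k}(F_P^{r-j})^*$.

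Combining the two inclusions yields $T_z^{(k)}M_P=\bigoplus_{j=0}^{k}(F_P^{r-j})^*$, so $\dim F_z^k=\dim F_P^{r-k}$, which equals $\dim F_P^k$ by Proposition \ref{symmetrichomo}. The rank equality $r_z=r$ follows because $F_z^r$ is nonzero (corresponding to the one-dimensional $(F_P^0)^*$) while $T_z^{(r)}M_P$ already exhausts $V_P$ and leaves no room for $F_z^{r+1}$. I expect the delicate step to be the upper bound: it crucially requires $T_z M_P$ to be pure of $\mathbb{C}^*$-weight $-1$, a fact that only emerges once smoothness and the homaloidal property are combined to pin down the tangent space; without either ingredient, stray higher-weight components could appear in $T_z M_P$ and the weight-based induction would break down.
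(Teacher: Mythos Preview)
Your proof is correct and follows essentially the same idea as the paper's: both pin down $T_zM_P=(F_P^{r-1})^*$ using the curves $\gamma_w$ together with smoothness and the homaloidal property, and then exploit the $\mathbb{C}^*$-weight structure to force each osculating quotient into the correct weight piece $(F_P^{r-k})^*$. The only difference is packaging---you separate this into a lower bound (from the derivatives $\gamma_w^{(k)}(0)$) and an upper bound (from equivariance of the fundamental forms), whereas the paper does it in one stroke by observing that the normal coordinate functions $h^k_j$ are homogeneous of degree $k$ in the tangential coordinates, which is exactly your weight argument phrased as ``comparing the degree with respect to $t$''.
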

	\begin{proof}
		Let $\mathscr{L}_w$ be the image of the line $L_w = \{[a:bw]\mid [a:b]\in \mathbb{P}^1, P(w)\neq 0 \}$ under the rational map $\Phi_{P}$.
		The tangent direction of $\mathscr{L}_w$ at $z$ is 
		\[
		[0:\cdots:\frac{d P(w)}{P(w)}:0],
		\]
		and the coordinate of the neighborhood of $z$ is
		\[
		[\frac{t^r}{P(w)}:\frac{t^{r-1}w}{P(w)}:\frac{t^{r-2}\iota^2_w}{P(w)}: \cdots:\frac{td P(w)}{P(w)}:1].
		\]
		Therefore, the tangent space $T_zM_P$ can be identified with $W^*$.
		Hence by \cite[Lemma 2.5]{Fu2017EulersymmetricPV} and the smoothness of $M_P$, we can take the coordinate 
		\[
		(z_{m_r}^{(r)},\cdots, z_{1}^{(r)},\cdots,z_{m_2}^{(2)},\cdots, z_{1}^{(2)},z_m,\cdots,z_1),
		\]
		such that $z_j = \frac{t\partial_j P(w) }{P(w)}$, and analytic functions $h^k_j(z_m, \cdots, z_1)$ satisfy
		\[
		z_j^{(k)} = h^k_j(z_m, \cdots, z_1), 2\leq k\leq r, 1\leq j\leq m_i.
		\]
		By the Taylor's expansion of $h^k_j$ and comparing the degree w.r.t $t$, we know that $h^k_j$ are homogeneous polynomials of degree $k$. Hence $r_z=r$ and the system of fundamental forms of $M_P$ at $z$ is $\textbf{F}_z = \oplus_{k=0}^rF^k_z$ with $F_z^k = \langle h^k_j\mid 1\leq j\leq m_k \rangle \subset \operatorname{Sym}^kW$ for $k\geq 2$. 
		Together with Proposition \ref{symmetrichomo} and Proposition \ref{smhomal} we know $m_k = \operatorname{dim}(F_P^{r-k})=\operatorname{dim}(F_P^k)$, for any $ 0\leq k\leq r$.
	\end{proof}
	
	Actually from the smoothness of $M_P$, we have more restrictions on the polynomial $P$. Recall 
	\begin{defn}\label{multilegen}
		Let $P$ be a homogeneous function and $\operatorname{det}(\operatorname{Hess}(\operatorname{ln}(P)))$ is not identically zero. 
		In this case we can define a function $P_*$ by \[P_*(\frac{d P(w)}{P(w)}) = \frac{1}{P(w)}.\] 
		If $P$ is homogeneous of degree $d$ then so is $P_*$. 
		We will call $P_*$ the multiplicative Legendre transform of $P$.
	\end{defn}
	\begin{prop}\cite[Proposition 3.6]{etingof2002fourier}\label{inverse}
		Fix a vector space $W$. Let $P \in \operatorname{Sym}^rW^*$ be a homogeneous polynomial of degree $r$, and $\operatorname{det}(\operatorname{Hess}(\operatorname{ln}(P)))$ is not identically zero. Then $P$ is a homaloidal polynomial if and only if its multiplicative Legendre transform $P_*$ is a rational function. Moreover, in this case, 
		$$
		d \operatorname{ln} P_* = (d \operatorname{ln} P)^{-1}.
		$$
	\end{prop}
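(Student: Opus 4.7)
The plan is to work throughout with the logarithmic differential $d\operatorname{ln} P = (dP)/P$. Since $P$ is homogeneous of degree $r$, the rational map $w\mapsto d\operatorname{ln} P(w)$ is homogeneous of degree $-1$ in $w$, and its projectivization agrees with $P':\mathbb{P}W\dashrightarrow\mathbb{P}W^*$. Hence $P$ is homaloidal exactly when $d\operatorname{ln} P$ is birational as a rational map $W\dashrightarrow W^*$; a rational inverse of $P'$ lifts to a rational inverse of $d\operatorname{ln} P$ uniquely, with the scaling ambiguity pinned down by Euler's identity $\langle d\operatorname{ln} P(w),w\rangle=r$.

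For the forward direction, let $w(\xi)$ denote the rational inverse of $d\operatorname{ln} P$; degree counting forces it to be homogeneous of degree $-1$ in $\xi$. Then $P_*(\xi):=1/P(w(\xi))$ is a composition of rational functions and hence a rational function on $W^*$; it is homogeneous of degree $r$, and by construction it satisfies $P_*(d\operatorname{ln} P(w))=1/P(w)$. The converse together with the moreover formula is handled by one chain-rule computation. Taking logarithms of the defining relation yields $\operatorname{ln} P_*(d\operatorname{ln} P(w))=-\operatorname{ln} P(w)$, and differentiating in $w$ gives
\[
(d\operatorname{ln} P_*)_{d\operatorname{ln} P(w)}\circ \operatorname{Hess}(\operatorname{ln} P)_w \;=\; -\,d\operatorname{ln} P(w).
\]
Separately, differentiating the Euler identity $\langle d\operatorname{ln} P(w),w\rangle=r$ in $w$ and using the symmetry of the Hessian gives $\operatorname{Hess}(\operatorname{ln} P)_w\cdot w = -d\operatorname{ln} P(w)$. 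The hypothesis $\operatorname{det}\operatorname{Hess}(\operatorname{ln} P)\not\equiv 0$ allows us to invert the Hessian generically and conclude $(d\operatorname{ln} P_*)_{d\operatorname{ln} P(w)}=w$, which is precisely the moreover formula $d\operatorname{ln} P_*=(d\operatorname{ln} P)^{-1}$. With this in hand the converse direction is immediate: if $P_*$ is a rational function, then $d\operatorname{ln} P_*$ is a rational left-inverse to $d\operatorname{ln} P$, forcing $d\operatorname{ln} P$ to be birational and hence $P$ to be homaloidal.

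I expect the trickiest point to be the converse direction, because a priori the hypothesis ``$P_*$ is a rational function on $W^*$'' only constrains the values of $P_*$ on the image of $d\operatorname{ln} P$, so one has to be sure that the chain-rule identity above remains valid purely on the basis of the defining relation together with the non-degeneracy of $\operatorname{Hess}(\operatorname{ln} P)$. This is exactly where the Hessian hypothesis enters: it is the input that upgrades a ``function on the image'' into a genuine rational inverse defined on all of $W^*$.
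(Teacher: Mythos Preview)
The paper does not supply its own proof of this proposition: it is simply quoted from \cite{etingof2002fourier}, so there is nothing in the present paper to compare your argument against. Your proof is self-contained and correct; it is essentially the standard computation, and it matches the approach in the cited reference.

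Two small remarks on presentation. First, in the displayed chain-rule identity the expression $(d\operatorname{ln} P_*)_{d\operatorname{ln} P(w)}\circ \operatorname{Hess}(\operatorname{ln} P)_w$ is slightly ambiguous as written: strictly speaking, $(d\operatorname{ln} P_*)_\xi$ is an element of $W$, not a linear map, so what you mean is $\operatorname{Hess}(\operatorname{ln} P)_w\cdot (d\operatorname{ln} P_*)_\xi$ after invoking the symmetry of the Hessian. With that reading your comparison with $\operatorname{Hess}(\operatorname{ln} P)_w\cdot w=-d\operatorname{ln} P(w)$ goes through exactly, and generic invertibility of the Hessian yields $(d\operatorname{ln} P_*)_{d\operatorname{ln} P(w)}=w$. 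Second, the worry you flag about the converse direction is correctly resolved by the Hessian hypothesis: since $\det\operatorname{Hess}(\operatorname{ln} P)\not\equiv 0$, the map $d\operatorname{ln} P$ is dominant, so a rational function $P_*$ on $W^*$ satisfying the defining relation is uniquely determined, and the chain-rule identity is a genuine identity of rational functions on $W$. From there, $d\operatorname{ln} P_*$ is a rational left-inverse of a dominant map, hence $d\operatorname{ln} P$ is birational and $P$ is homaloidal, exactly as you say.
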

	\begin{defn}
		We say a homogeneous polynomial $P$ is an EKP-homaloidal polynomial if its multiplicative Legendre transform $P_*$ is also a homogeneous polynomial.
	\end{defn}
	From \cite[Lemma 3.5]{etingof2002fourier}, we know $P_{**}=P$. Hence $P$ is an EKP-homaloidal polynomial if and only if $P_{*}$ is an EKP-homaloidal polynomial.
	Following the proof of Proposition \ref{symmetricFund}, we immediately have
	\begin{prop}\label{ekphomal}
		If the Euler-symmetric variety $M_P$ is smooth, then $P$ is an EKP-homaloidal polynomial.
	\end{prop}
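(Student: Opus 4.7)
The plan is to identify the distinguished affine coordinate at the terminal point $z=[0:\cdots:0:1]$ coming from the $\mathbb{C}$-summand of $V_P$ with the multiplicative Legendre transform $P_*$, and then invoke Proposition \ref{symmetricFund} to force that coordinate — and hence $P_*$ — to be a polynomial. The smoothness of $M_P$ already gives, via Proposition \ref{smhomal} and Proposition \ref{inverse}, the existence of $P_*$ as a homogeneous rational function of degree $r$ satisfying
$$P_*\!\left(\frac{dP(w)}{P(w)}\right) = \frac{1}{P(w)}.$$
The task is therefore to promote ``rational'' to ``polynomial'', using the local geometry at $z$ that has already been set up.

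First I would return to the affine chart at $z$ exactly as in the proof of Proposition \ref{symmetricFund}. There the tangent coordinates are $z_j = t\,\partial_j P(w)/P(w)$, and the coordinate coming from the one-dimensional $\mathbb{C}$-summand of $V_P$ is $t^r/P(w)$. Substituting $dP(w)/P(w) = z/t$ into the Legendre identity and using the degree-$r$ homogeneity of $P_*$, one immediately obtains
$$\frac{t^r}{P(w)} \;=\; t^r\, P_*\!\left(\frac{z}{t}\right) \;=\; P_*(z_1,\ldots,z_m).$$
Since $\dim F^r_P = 1$, Proposition \ref{symmetricFund} gives $\dim F^r_z = 1$, so there is a single analytic function $h^r_1(z_1,\ldots,z_m)$ describing this affine coordinate; that proposition further asserts that $h^r_1$ is a homogeneous polynomial of degree $r$. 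The identification $h^r_1 = P_*$ then forces $P_*$ to be a homogeneous polynomial of degree $r$, which is the definition of EKP-homaloidality.

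There is no real obstacle here: the only step that requires any thought is the identification $h^r_1 = P_*$, and this is not an independent calculation but simply a rewriting of the coordinate $t^r/P(w)$ through the defining identity of the multiplicative Legendre transform. The proposition is therefore essentially a direct corollary of Proposition \ref{symmetricFund} together with Proposition \ref{inverse}, as hinted in the text preceding it.
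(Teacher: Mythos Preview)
Your argument is correct and matches the paper's proof: both identify the affine coordinate $t^r/P(w)$ at $z$ with $h^r_1(z_1,\dots,z_m)$ from Proposition~\ref{symmetricFund}, observe that $h^r_1$ satisfies the defining relation of the multiplicative Legendre transform, and conclude $P_*=h^r_1$ is a homogeneous polynomial. Your detour through Proposition~\ref{inverse} to first obtain $P_*$ as a rational function is not needed (and strictly speaking would require checking the Hessian hypothesis there); the paper simply reads off $P_*=h^r_1$ directly from the definition, but this is a cosmetic difference only.
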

	\begin{proof}
		From the proof of Proposition \ref{symmetricFund}, we know that $\frac{1}{P(w)} = h^r_1(\frac{d P(w)}{P(w)})$, namely $P_* = h^r_1$. Hence $P$ is an EKP-homaloidal polynomial.
	\end{proof}
	
	Let $\pi : \textbf{Bl}_zM_P\to M_P$, $E$ be the exceptional divisor. In fact, the inverse map $\rho := \Phi^{-1}_{P}: M_P \dashrightarrow \mathbb{P}(\mathbb{C}\oplus W)$ is the projection map. Then the composed rational map $\overline{\pi} :=\rho\circ\pi: E \dashrightarrow \mathbb{P}(\mathbb{C}\oplus W)$ is defined by the $(r-1)$-th fundamental form of $M_P$ at $z$, namely, $h^{r-1}_j$, $1\leq j\leq m$. In particular, the image of the rational map $\overline{\pi}$ is contained in $\mathbb{P}W$. 
	\begin{prop}\label{derivative}
		If the Euler-symmetric variety $M_P$ is smooth, then $F^{r-1}_z = F^{r-1}_{P_*}$. In particular, $\overline{\pi}$ is birational map.
	\end{prop}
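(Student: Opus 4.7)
The plan is to identify the generators of $F^{r-1}_z$ explicitly and recognize them as the partial derivatives of $P_*$. Recall from the proof of Proposition \ref{symmetricFund} that $F^{r-1}_z = \langle h^{r-1}_j \mid 1\leq j\leq m\rangle$, where each $h^{r-1}_j$ is the unique homogeneous polynomial of degree $r-1$ in the tangent coordinates $z_1,\ldots,z_m$ determined by
\[
h^{r-1}_j(z_1,\ldots,z_m) = \frac{t^{r-1}w_j}{P(w)}, \qquad z_j = \frac{t\,\partial_j P(w)}{P(w)}.
\]
Thus the task reduces to writing $\tfrac{t^{r-1}w}{P(w)}$ as the gradient of a single polynomial in $z$.

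The key step is to invoke Proposition \ref{inverse}: the map $d\ln P_*$ is inverse to $d\ln P$. Substituting $u := d\ln P(w) = z/t$ gives $d\ln P_*(z/t) = w$, i.e., $dP_*(z/t) = P_*(z/t)\cdot w$. Using the homogeneity of $P_*$ (degree $r$) and $dP_*$ (degree $r-1$), together with the identity $P_*(z) = t^r/P(w)$ already recorded in the proof of Proposition \ref{ekphomal}, this rearranges to
\[
dP_*(z) = \frac{P_*(z)}{t}\cdot w = \frac{t^{r-1}w}{P(w)}.
\]
Reading off the $j$-th component yields $\partial_j P_*(z) = h^{r-1}_j(z)$, and hence $F^{r-1}_z = \langle \partial_j P_*\rangle = \langle dP_*\rangle = F^{r-1}_{P_*}$. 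The only delicate point in this step is correctly tracking the powers of $t$ when invoking the inversion formula, but no deeper input such as the Ein--Niu theorem is required here (that will only be needed later, to upgrade this local identification to the full system of fundamental forms).

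For the last assertion, by construction $\overline{\pi}$ is defined by the $(r-1)$-th fundamental form at $z$, so the identification just established shows that $\overline{\pi}$ is literally the derivative map $[dP_*]\colon \mathbb{P}W^*\dashrightarrow \mathbb{P}W$. By Proposition \ref{inverse}, $[dP_*]$ is the projective inverse of $[dP]$, which is birational since $P$ is homaloidal by Proposition \ref{smhomal}. Therefore $P_*$ is itself homaloidal and $\overline{\pi}$ is birational.
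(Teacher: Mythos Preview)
Your proof is correct and follows essentially the same approach as the paper: both arguments hinge on Proposition~\ref{inverse} to identify the $(r-1)$-th fundamental form at $z$ with $dP_*$. The only difference is presentational: you work directly with the logarithmic inversion $d\ln P_* = (d\ln P)^{-1}$ and track the powers of $t$, whereas the paper phrases the same computation via the polynomial maps $\Phi_1 = dP$ and $\Phi_1^* = dP_*$, first showing $\Pi\circ\Phi_1 = \Phi_1^*\circ\Phi_1 = P^{r-2}\cdot\mathrm{id}$ and then composing on the right with $\Phi_1^*$ to cancel.
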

	\begin{proof}
		By definition of $h^{r-1}_j$, we have that $h^{r-1}_{m+1-j} (\frac{dP(w)}{P(w)}) = \frac{w_j}{P(w)}$, for $w \in W$. Let $\Pi: W^* \to W, w \mapsto (h^{r-1}_m(w), \cdots, h^{r-1}_1(w))$. Identifying $E$ with $\mathbb{P}W^*$, the rational map $\overline{\pi}$ and $\Pi$ are same under the projectivization.
		Consider the morphisms
		\begin{align*}
			\Phi_1 : &\; W \to W^*, w \mapsto dP(w), \\
			\Phi_1^* : &\; W^* \to W, w \mapsto dP_*(w).
		\end{align*}
		By Proposition \ref{inverse} and the definition of EKP-homaloidal polynomials, we know that 
		\[
		\Phi_1\circ \Phi_1^*(w) = P_*^{r-2}(w)w, \Phi_1^*\circ \Phi_1(w) = P^{r-2}(w)w.
		\]
		On the other hand since $h^{r-1}_j$ are homogeneous polynomials of degree $r-1$, we also have 
		\[
		\Pi \circ \Phi_1(w) = P^{r-2}(w)w,
		\]
		therefore $\Pi \circ \Phi_1 = \Phi_1^*\circ \Phi_1$. Composed on the right by the morphsim $\Phi_1^*$, we have 
		\begin{align*}
			\Phi_1^* \circ \Phi_1 \circ \Phi_1^* (w) &= P^{(r-2)(r-1)}_*(w)\Phi_1^*(w),\\
			\Pi \circ \Phi_1 \circ \Phi_1^* (w) &= P^{(r-2)(r-1)}_*(w)\Pi(w).
		\end{align*}
		Therefore, we know that $\Pi = \Phi_1^*$.
	\end{proof}
	\section{Bia{\l}ynicki-Birula decomposition and the boundary divisor}
	
	\begin{lem}\label{multiAction}
		Let $\mathbb{G}_m \subset GL(V_{P})$ be the multiplicative subgroup corresponding to the $\mathbb{C}^*$-action on $M_P$ in Remark \ref{explicitaction}. 
		Let $\mathbb{P}V_{F_P^i} \subset \mathbb{P}V_{P}$ be the $(m_i-1)$-plane, $M_i = M_P \cap \mathbb{P}V_{F^i_P}$. Then the set of fixed points $(M_P)^{\mathbb{G}_m} = \coprod_{i = 0}^r M_i$, which is a disjoint union. 
	\end{lem}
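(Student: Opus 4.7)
The plan is to reduce the computation of $(M_P)^{\mathbb{G}_m}$ to the fixed locus of a linear $\mathbb{G}_m$-action on the ambient projective space, and then intersect with $M_P$. The key observation is that the direct sum decomposition $V_P = \mathbb{C} \oplus W \oplus (F_P^2)^* \oplus \cdots \oplus (F_P^r)^*$ is already the weight decomposition for the $\mathbb{G}_m$-action described in Remark \ref{explicitaction}.

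First I would unpack the explicit formula
\[
\lambda \cdot [t:w:f^2:\cdots:f^r] = [t:\lambda w:\lambda^2 f^2:\cdots:\lambda^r f^r]
\]
to identify each summand $V_{F_P^i}$ (equal to $\mathbb{C}$ for $i=0$, $W$ for $i=1$, $(F_P^i)^*$ for $i\geq 2$) as the weight-$i$ eigenspace of the induced linear $\mathbb{G}_m$-action on $V_P$. Because the weights $0,1,\ldots,r$ are pairwise distinct, a point $[v]\in\mathbb{P}V_P$ is $\mathbb{G}_m$-fixed if and only if $v$ lies in a single weight eigenspace, so
\[
(\mathbb{P}V_P)^{\mathbb{G}_m} = \coprod_{i=0}^{r} \mathbb{P}V_{F_P^i},
\]
and this union is automatically disjoint because the linear subspaces $V_{F_P^i}\subset V_P$ pairwise meet only at the origin, hence their projectivizations are pairwise disjoint inside $\mathbb{P}V_P$.

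Intersecting with $M_P$ now gives
\[
(M_P)^{\mathbb{G}_m} = M_P \cap (\mathbb{P}V_P)^{\mathbb{G}_m} = \coprod_{i=0}^{r} \bigl(M_P \cap \mathbb{P}V_{F_P^i}\bigr) = \coprod_{i=0}^{r} M_i,
\]
and the disjointness of this decomposition is inherited from that of the ambient projective subspaces. Since the lemma is essentially a direct unpacking of the explicit action in Remark \ref{explicitaction}, there is no substantive obstacle; the only minor bookkeeping point is to allow some $M_i$ to be empty, which does not affect the statement. I would spend an additional line confirming that $M_P$ is preserved by $\mathbb{G}_m$ (which is part of Theorem \ref{fu20}), so that the intersection with $(\mathbb{P}V_P)^{\mathbb{G}_m}$ indeed computes $(M_P)^{\mathbb{G}_m}$.
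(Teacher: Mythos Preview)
Your proposal is correct and follows exactly the same approach as the paper: both identify the weight decomposition of $V_P$ from the explicit $\mathbb{C}^*$-action, conclude that $(\mathbb{P}V_P)^{\mathbb{G}_m}=\coprod_{i=0}^r\mathbb{P}V_{F_P^i}$, and then intersect with $M_P$. The paper's proof is just a two-line version of what you wrote.
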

	\begin{proof}
		By the definition of $\mathbb{G}_m$-action on the projective space $\mathbb{P}V_{P}$, the set of fixed points is the disjoint union $\coprod_{i=0}^r\mathbb{P}V_{F_P^i}$. Hence, $(M_P)^{\mathbb{G}_m} = \coprod_{i = 0}^r M_i$.
	\end{proof}
	
	\begin{prop}\label{VMRT}
		Fix a vector space $W$. Let $P \in \operatorname{Sym}^rW^*$ be a reduced homogeneous polynomial of degree $r$. 
		If the Euler-symmetric variety $M_P\subset \mathbb{P}V_{P}$ is smooth, then for a general point $x \in M_P$, there exists a line in $\mathbb{P}V_{P}$ which lies on $M_P$. 
	\end{prop}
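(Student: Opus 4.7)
The plan is to reduce the problem to constructing a single line in $M_P$ through the base point $o$: since $W$ acts linearly on $V_P$ (Remark \ref{explicitaction}) and preserves $M_P$ with dense open orbit $U$ through $o$, translating such a line by a suitable element of $W$ produces a line through any point of $U$, and hence through a general point of $M_P$.

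Lines through $o$ admit an explicit algebraic characterization. For each tangent vector $v \in T_o M_P = W$, the $W$-orbit curve of $o$ along $v$ is
\[
t \mapsto \Phi_P([1:tv]) = [1:tv:t^2\iota_v^2(P):\cdots:t^r\iota_v^r(P)],
\]
which is in general a rational curve of degree $r$ in $\mathbb{P}V_P$ and collapses to a line precisely when $\iota_v^k(P) = 0$ for every $2 \le k \le r$. In that case the resulting line joins $o$ to the point $[0:v:0:\cdots:0]$, which lies in the $\mathbb{G}_m$-fixed component $M_1 = M_P \cap \mathbb{P}W$ of Lemma \ref{multiAction}. Moreover, since the $\mathbb{G}_m$-action is of Euler type at $o$ (all tangent weights equal $+1$), every $\mathbb{G}_m$-invariant line in $M_P$ through $o$ — necessarily the projectivization of a two-dimensional invariant subspace $\mathbb{C}e_0 \oplus \mathbb{C}u \subset V_P$ with $u$ of weight $b$ — must have $b=1$, so its other endpoint lies in $M_1$. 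Hence producing a line in $M_P$ through $o$ is equivalent to showing $M_1 \ne \emptyset$.

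The main step, and the main obstacle, is to establish $M_1 \ne \emptyset$ under the smoothness hypothesis. I would combine smoothness with the EKP-homaloidal symmetry between $o$ and $z$: by Propositions \ref{symmetricFund} and \ref{derivative}, $P_*$ is itself a homogeneous polynomial of degree $r$ and the system of fundamental forms at $z$ has the same dimensions as that at $o$, so the roles of $o$ and $z$ (and of $P$ and $P_*$) are symmetric on $M_P$. Using this symmetry together with the Bia{\l}ynicki--Birula decomposition — whose top cell $M_0^+$ is $U$ and whose boundary $D = M_P \setminus U$ is a divisor — I would exhibit a $\mathbb{G}_m$-invariant rational curve from $o$ of tangent weight $1$, either directly or by transport from the dual structure centered at $z$. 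The delicate point is ruling out the possibility that the minimum positive weight supporting a nonempty fixed component is $\ge 2$; this requires a careful weight-balancing argument exploiting the smoothness of $M_P$ and the rank matching between $F^i_P$ and $F^{r-i}_P$ from Proposition \ref{symmetrichomo}.
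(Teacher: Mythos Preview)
Your reduction to finding a line through $o$, and your identification of this with showing $M_1 \neq \emptyset$, matches the paper's setup exactly. The gap is in the main step: you do not actually produce a point of $M_1$. Your proposed ``weight-balancing argument'' is left entirely unspecified, and you yourself flag the obstacle --- ruling out that the minimal nonempty fixed component sits at weight $\ge 2$ --- without indicating how to overcome it. Abstract symmetry between $o$ and $z$ and dimension equalities from Propositions~\ref{symmetrichomo} and~\ref{symmetricFund} are not enough on their own to force $M_1 \neq \emptyset$.

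The paper's missing idea is concrete and short. Proposition~\ref{derivative} gives more than a symmetry principle: it identifies $F^{r-1}_z$ explicitly as $\langle dP_* \rangle$. Using the local parametrization of $M_P$ near $z$ from the proof of Proposition~\ref{symmetricFund}, one takes a smooth point $u$ of the hypersurface $V(P_*) \subset \mathbb{P}T_z M_P$ (such a point exists precisely because $P$, and hence $P_*$, is reduced) and flows the corresponding point of $M_P$ under $\mathbb{G}_m$; the limit is $[0:dP_*(u):0:\cdots:0] \in \mathbb{P}W$, which lies in $M_P$ and is $\mathbb{G}_m$-fixed of weight $1$, hence in $M_1$. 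In other words, the dual variety $Z$ of $V(P_*)$ under the gradient map $[u] \mapsto [dP_*(u)]$ sits inside $M_1$, and it is visibly nonempty. This is the step your proposal gestures toward (``transport from the dual structure centered at $z$'') but never carries out; once you use Proposition~\ref{derivative} in this explicit way, no delicate weight-balancing is needed.
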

	\begin{proof}
		From Theorem \ref{fu20} we know $M_P$ is an equivariant compactification of the vector group $W$, it suffices to prove that at the original point $o \in M_P$, there exists a line in $\mathbb{P}V_{P}$ which lies on $M_P$. Also from Theorem \ref{fu20}, this can be reduced to prove that $V(F^2_P) \neq \emptyset$. 
		
		Note that from Proposition \ref{derivative}, $F^{r-1}_z=F^{r-1}_{P_*}=\langle dP_*\rangle$. Let $Z$ be the image of $V(P_*)$ of the following rational map:
		\[
		\Phi_{F^{r-1}_z}: \mathbb{P}T_zM_P (\supset V(P_*)) \dashrightarrow \mathbb{P}V_{F_{z}^{r-1}}(\subset \mathbb{P}V_P), [w] \mapsto [dP_*(w)].
		\]
		Since $P$ is a reduced homogeneous polynomial, $P_*$ is also a reduced homogeneous polynomial. Clearly $Z$ is the dual variety of $V(P_*)$, which is non-empty. From the proof of Proposition \ref{symmetricFund}, $Z$ is fixed by $\mathbb{G}_m$ and hence $Z\subset M_1$.

		Write the coordinates of $\mathbb{P}V_{P}$ as $[z_0:z_1:\cdots:z_m:w^{(2)}_1:\cdots:w^{(2)}_{m_2}:\cdots:w^{(r)}_1]$, and let $F^2_P = \langle Q^{(2)}_1, \cdots, Q^{(2)}_{m_2} \rangle$ generated by $Q^{(2)}_1, \cdots, Q^{(2)}_{m_2}$. Then the following homogeneous polynomials of degree two lie in the vanishing ideal of $M_P \subset \mathbb{P}V_{P}$:
		\begin{align*}
			f_1^{(2)} &= z_0w_1^{(2)} - Q_1^{(2)}(z_1, \cdots, z_m);\\
			& \vdots \\
			f_{m_2}^{(2)} &= z_0w_{m_2}^{(2)} - Q_{m_2}^{(2)}(z_1, \cdots, z_m).
		\end{align*}
		Therefore, $Z\subset M_1 \subset V(F^2_P) $. In particular, $V(F^2_P) \neq \emptyset$. 
	\end{proof}
	\begin{rem}\label{remVMRT}
		\begin{itemize}
			\item The existence of a line is equivalent to $\operatorname{ord}(\textbf{F}_P) = 1$ where the order is defined to be the largest natural number $k$ such that $V(F_P^k)=\emptyset$;
			\item Furthermore, we have $M_1 = V(F^2_P)$. Since every point in $V(F^2_P)$ corresponds to a line through $o$ on $M_P$, whose terminal point lies in $M_1$.
		\end{itemize}
	\end{rem}
	
	Next we recall the Bia{\l}ynicki-Birula decomposition theorem (over $\mathbb{C}$) as follows.
	\begin{thm}[\cite{BB:1973}, Theorem 4.1, 4.2, 4.3]\label{BBdecom}
		Let $M$ be a complete smooth complex manifold with an algebraic $\mathbb{G}_m$-action. Let $M^{\mathbb{G}_m}$ be the set of fixed points under the $\mathbb{G}_m$-action and $M^{\mathbb{G}_m}=\bigsqcup_{i\in \mathbf{I}} Y_i$ be the decomposition of $M^{\mathbb{G}_m}$ into connected components. Then there exists a unique locally closed $\mathbb{G}_m$-invariant decomposition of $M$,\[M=\bigsqcup_{i\in \mathbf{I}} M^+(Y_i) = \bigsqcup_{i\in \mathbf{I}} M^-(Y_i)\]
		and morphisms $\gamma^{\pm}_i: M^{\pm}(Y_i)\rightarrow Y_i$, $i\in \mathbf{I}$, such that the following holds.
		\begin{enumerate}
			\item $M^{\pm}(Y_i)$ is a smooth $\mathbb{C}^*$-invariant complex submanifold of $M$. $Y_i$ is a closed complex submanifold of $M^{\pm}(Y_i)$. $M^+(Y_i)\cap M^-(Y_i)=Y_i$.
			
			\item $\gamma^{\pm}_i$ is algebraic and is a $\mathbb{C}^*$-fibration over $Y_i$ (i.e., each fiber is a $\mathbb{C}^*$-module) such that $\gamma^{\pm}_i|_{Y_i}$ is the identity, for $\forall i \in \mathbf{I}$.
			
			\item Let $T_x(M)^+, T_x(M)^0, T_x(M)^-$ be the weight spaces of the isotropy action on the tangent space $T_x(M)$ with positive, zero, negative weights respectively. Then for any $x\in Y_i$, the tangent space $T_x(M^{\pm}(Y_i))=T_x(M)^0\oplus T_x(M)^{\pm}$ and the dimension of the fibration given by $\gamma_i^{\pm}$ equals $\dim T_x(M)^{\pm}$.
		\end{enumerate}
	\end{thm}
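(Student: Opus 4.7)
The overall plan is to define the cells via the one-parameter limit maps associated to the $\mathbb{G}_m$-action, and then extract all the finer structure (smoothness, fibration, tangent-space description) from equivariant local linearization near the fixed components.

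First I would construct the maps $\gamma^{\pm}_i$. Since $M$ is complete, for any $x \in M$ the orbit morphism $\mathbb{G}_m \to M$, $t\mapsto t\cdot x$, extends uniquely to morphisms $\mathbb{A}^1\to M$ and $\mathbb{P}^1\setminus\{0\}\to M$ by the valuative criterion of properness. The values at the added points $0$ and $\infty$ are $\mathbb{G}_m$-fixed, defining set-theoretic maps $\gamma^{+},\gamma^{-}\colon M \to M^{\mathbb{G}_m}$. Setting $M^{\pm}(Y_i)=(\gamma^{\pm})^{-1}(Y_i)$ produces the two disjoint decompositions announced in the statement.

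Next I would analyze each cell locally near a point $y\in Y_i$. By Sumihiro's theorem, $y$ has a $\mathbb{G}_m$-invariant affine open neighborhood, and by algebraic linearizability of torus actions at smooth fixed points, after shrinking one obtains a $\mathbb{G}_m$-equivariant isomorphism of a neighborhood of $y$ in $M$ with a neighborhood of the origin in the representation $T_yM = T_yM^{+}\oplus T_yM^{0}\oplus T_yM^{-}$. In this chart $Y_i$ is cut out by the vanishing of the nonzero-weight coordinates, and $M^{+}(Y_i)$ by the vanishing of the negative-weight coordinates (a point flows to $y$ as $t\to 0$ iff its negative-weight component vanishes). This simultaneously yields smoothness and local closedness of the cells, the intersection identity $M^{+}(Y_i)\cap M^{-}(Y_i)=Y_i$, the tangent-space formulas $T_xM^{\pm}(Y_i)=T_xM^{0}\oplus T_xM^{\pm}$, and a presentation of $\gamma^{+}_i$ as projection onto the zero-weight factor, which is a $\mathbb{C}^*$-module bundle of the correct rank.

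Finally I would globalize over $Y_i$. Covering $Y_i$ by such equivariant charts, the transition maps are $\mathbb{G}_m$-equivariant isomorphisms of direct sums of weight spaces; because an equivariant morphism must preserve the weight grading, the transitions respect the subbundle defined by the positive weights, and the local fibrations patch into a global algebraic $\mathbb{C}^{*}$-fibration $\gamma^{+}_i\colon M^{+}(Y_i)\to Y_i$, with the symmetric statement for $\gamma^{-}_i$. Uniqueness of the decomposition is then automatic, since any $\mathbb{G}_m$-invariant locally closed decomposition satisfying (1)--(3) must compute the limit $\lim_{t\to 0}t\cdot x$ along each orbit, and this limit is determined by properness. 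The main obstacle I anticipate is exactly this last step: even though each equivariant chart looks like a trivial vector bundle over its zero section, verifying that the patching is algebraic (and not merely analytic or set-theoretic) requires the algebraic slice/linearization theorem for torus actions on smooth varieties, which is where the smoothness and completeness of $M$ together with the algebraic nature of the $\mathbb{G}_m$-action are genuinely used.
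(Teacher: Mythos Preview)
The paper does not prove this statement at all: Theorem~\ref{BBdecom} is quoted from \cite{BB:1973} (Theorems 4.1, 4.2, 4.3) as a black box and is used only as background input for the subsequent analysis of $M_P$. There is therefore no ``paper's own proof'' to compare against.

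Your sketch is a reasonable outline of the standard argument behind the Bia{\l}ynicki-Birula decomposition: define the plus/minus cells via $\lim_{t\to 0}$ and $\lim_{t\to\infty}$ using completeness, then linearize equivariantly near the fixed locus (Sumihiro plus an \'etale slice argument) to read off smoothness, the tangent-space splitting, and the affine-bundle structure. One small caution: the original Bia{\l}ynicki-Birula proof does not literally go through Sumihiro's theorem (which appeared later) but rather uses an equivariant completion and embedding argument; and the step where you ``patch'' the local trivializations into an algebraic affine-space fibration is the genuinely delicate point---the transition maps need not be linear in the fiber coordinates, so one gets a Zariski-locally trivial affine-space fibration rather than a vector bundle in general. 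None of this affects the present paper, since the authors only need the decomposition and the identification of the cells $M_i^{\pm}$ inside $\mathbb{P}V_P$, which they write down directly.
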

	
	We know the Bia{\l}ynicki-Birula decomposition of $\mathbb{P}V_P$ with respect to the $\mathbb{C}^*$-action given by $\mathbb{G}_m\subset GL(V_P)$ in Lemma \ref{multiAction} can be written as follows:
	\begin{align*}
		(\mathbb{P}V_P)^{\mathbb{G}_m}&=\bigsqcup^r_{k=0} \mathbb{P}V_{F^k_P},\\
		(\mathbb{P}V_P)_i^+&=\mathbb{P}(\oplus^r_{k=i}V_{F^k_P})\backslash \mathbb{P}(\oplus^r_{k=i+1}V_{F^k_P}),\\
		(\mathbb{P}V_P)_i^-&=\mathbb{P}(\oplus^i_{k=0}V_{F^k_P})\backslash \mathbb{P}(\oplus^{i-1}_{k=0}V_{F^k_P}).
	\end{align*}
	Then we let $M_i^+=(\mathbb{P}V_P)_i^+\cap M_P$ and $M_i^-=(\mathbb{P}V_P)_i^-\cap M_P$ respectively. From Iversen's fixed point theorem \cite[Proposition 1.3]{iversen1972fixed} and the smoothness of $M_P$, we know that $(M_P)^{\mathbb{G}_m}=\bigcup^r_{i=1} M_i$ is a smooth subvariety. Hence $M_i$ are also smooth for any $0\leq i \leq r$.
	\begin{prop}\label{dualdef}
		Fix a vector space $W$. Let $P \in \operatorname{Sym}^rW^*$ be a reduced homogeneous polynomial of degree $r$. 
		If the Euler-symmetric variety $M_P$ is smooth, then $M_1=Z$ and moreover $M_1$ 
		is not dual defective, i.e., its dual variety is a hypersurface. 
	\end{prop}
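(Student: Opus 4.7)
The plan is to establish $M_1 = Z$; non-defectiveness will then follow by reflexivity. Proposition \ref{VMRT} gives $Z \subset M_1$ and Remark \ref{remVMRT} identifies $M_1 = V(F^2_P) \subset \mathbb{P}W$, so the remaining task is the reverse inclusion $V(F^2_P) \subset Z = \overline{dP_*(V(P_*))}$.

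My idea is to show that every generic $[v] \in V(F^2_P)$ is the Gauss image of a smooth point of $V(P_*)$ under the birational map $[dP_*]\colon \mathbb{P}W^* \dashrightarrow \mathbb{P}W$ provided by Proposition \ref{derivative}. Since the projective inverse $[dP]$ satisfies $dP(v) = 0$ for $v \in V(F^2_P) \subset V(F^{r-1}_P)$, the preimage of $[v]$ has to be recovered as a limit direction. I would Taylor-expand along $w = v + \epsilon u$ for a generic $u \in W$. Because $v \in V(F^2_P)$ forces all partials $\partial^\alpha P$ with $|\alpha| \leq r-2$ to vanish at $v$, the expansion collapses to $P(v + \epsilon u) = r\epsilon^{r-1} P(v, u^{r-1}) + \epsilon^r P(u)$ and $dP(v + \epsilon u) = \epsilon^{r-2} A(v, u) + O(\epsilon^{r-1})$ for some leading vector $A(v, u) \in W^*$. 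Smoothness of $M_P$ forces $P$ to be homaloidal by Proposition \ref{smhomal} and hence nondegenerate, so both $A(v, u)$ and $P(v, u^{r-1})$ are nonzero for generic $u$.

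The crux is then to feed $w = v + \epsilon u$ into the two identities from the proof of Proposition \ref{derivative}: $P_*(dP(w)) = P(w)^{r-1}$ and $dP_*(dP(w)) = P(w)^{r-2}\, w$. In the first, the left side begins at order $\epsilon^{r(r-2)}$ with coefficient $P_*(A(v, u))$, while the right side begins at the strictly higher order $\epsilon^{(r-1)^2} = \epsilon^{r(r-2)+1}$; this forces $P_*(A(v, u)) = 0$, so $[A(v, u)] \in V(P_*)$. In the second, both sides have the same leading order $\epsilon^{(r-1)(r-2)}$, and matching leading coefficients gives $dP_*(A(v, u)) = (rP(v, u^{r-1}))^{r-2}\, v$; projectively, $[v] = [dP_*(A(v, u))] \in \overline{dP_*(V(P_*))} = Z$.

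This establishes $V(F^2_P) \subset Z$ and hence $M_1 = Z = V(P_*)^\vee$. Since $P$ is reduced so is $P_*$, so $V(P_*)$ is a reduced hypersurface in $\mathbb{P}W^*$, and applying reflexivity component by component yields $M_1^\vee = V(P_*)$, a hypersurface; therefore $M_1$ is not dual defective. The main technical hazard I anticipate is the generic nonvanishing of $A(v, u)$ and $P(v, u^{r-1})$, which reduces to $\iota_v P \neq 0$ for $v \neq 0$, a consequence of $P$ being homaloidal.
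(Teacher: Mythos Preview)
Your argument is correct and takes a genuinely different route from the paper's.

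The paper argues geometrically via the Bia{\l}ynicki-Birula decomposition. It first shows that $Z$ is a union of irreducible components of $M_1$: using Iversen's theorem, $M_1$ is smooth so its components are disjoint, and the family $L_z^r$ of $\mathbb{G}_m$-stable curves through $z$ with tangent in $\operatorname{Sm}(V(P_*))$ has dimension $n-1$ and lies in $M_1^+\cup\{z\}$, forcing $Z$ to account for full components. It then rules out any leftover component $Z_1$ by a global dimension count: $\overline{Z_1^+}$ would be a boundary divisor avoiding $z$, hence contained in $\{z_0=0\}\cap\{w_1^{(r)}=0\}$, which has dimension at most $m-2$.

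Your approach is instead a direct, pointwise algebraic computation: for each $[v]\in V(F_P^2)$ you manufacture an explicit point $[A(v,u)]\in V(P_*)$ with $[dP_*(A(v,u))]=[v]$, by Taylor-expanding the polynomial identities $P_*(dP(w))=P(w)^{r-1}$ and $dP_*(dP(w))=P(w)^{r-2}w$ along $w=v+\epsilon u$ and matching leading orders. This bypasses the BB machinery and the smoothness of the fixed locus entirely. The only external input you need is that $P$ is EKP-homaloidal (Proposition~\ref{ekphomal}), so that $P_*$ is a polynomial and those identities hold globally; you invoke this implicitly through Proposition~\ref{derivative}. The nonvanishing of $A(v,u)$ and $P(v,u^{r-1})$ for generic $u$ indeed reduces to $\iota_vP\neq0$, which follows from homaloidality exactly as you say (otherwise the image of $[dP]$ would lie in a hyperplane).

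One cosmetic remark: your motivational sentence ``$dP(v)=0$ for $v\in V(F^2_P)\subset V(F^{r-1}_P)$'' is only literally true for $r\geq3$; for $r=2$ one has $dP(v)\neq0$ and $A(v,u)=dP(v)$. But your actual computation does not use $dP(v)=0$ and goes through uniformly.

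What each approach buys: the paper's argument slots into the BB framework used throughout \S3 and later in Proposition~\ref{PicVMRT}; yours is more elementary and self-contained, and in fact shows $V(F^2_P)\subset Z$ for \emph{every} $[v]$ rather than via a component-by-component closure argument.
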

	\begin{proof}
		By Proposition \ref{derivative}, we know that $Z\subset \mathbb{P}V_{F_P^2}$ is the dual variety of $V(P_*) \subset \mathbb{P}T_zM_P$. 
		Therefore, from the proof of Proposition \ref{VMRT} and Remark \ref{remVMRT}, we only need to show that $M_1 = Z$.
		
		Let $L_z^r$ be the union of all $\mathbb{G}_m$-stable rational curves through $z$ whose tangent direction at $z$ lies in $\operatorname{Sm}(V(P_*))=V(P_*)\cap(dP_*\neq 0)\subset \mathbb{P}T_zM_P$. 
		Then we have \[\operatorname{dim}L_z^r = \operatorname{dim}(V(P_*)) +1 = n-1.\]
		From the definition of $L_z^r$, we know that $L_z^r\backslash \{z\} \subset M_1^+$, hence $Z$ is a union of some irreducible components of $M_1$.
		
		Since $M_1$ is smooth, the irreducible components of $M_1$ do not intersect. Let $M_1 = Z \cup Z_1$. Suppose that $Z_1 \neq \emptyset$, then $\overline{Z_1^+}$ is also a divisor. By the definition of $Z$ (see the proof of Proposition \ref{VMRT}), $Z$ is actually the set of source points of the $\mathbb{G}_m$-stable rational curves on $M_P$ through $z$ with degree $r-1$. Since $Z_1\cap Z = \emptyset$, we know $z \notin \overline{Z_1^+}$. Write the coordinates of $\mathbb{P}V_{P}$ as $[z_0:z_1:\cdots:z_m:w^{(2)}_1:\cdots:w^{(2)}_{m_2}:\cdots:w^{(r)}_1]$, then $\overline{Z_1^+}$ is contained in the hyperplane $\{w_1^{(r)}=0\}$. Also we know $\overline{Z_1^+} \subset \{z_0=0\}$.
		This implies that $\dim(\overline{Z^+_1}) \leq m-2$, which contradicts with $\operatorname{dim}(\overline{Z_1^+})=m-1$. Therefore, $M_1 = Z$.
	\end{proof}
	\begin{prop}\label{DivisorTerminal}
		Let the boundary divisor $D = \cup_{j\in J}D_j$ and $D_j$ be irreducible components of $D$, for all $j\in J$. Then the terminal point $z \in D_j$, for any $j\in J$.
	\end{prop}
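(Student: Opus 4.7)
The plan is to exploit that each irreducible component $D_j$ is stable under the full $W\rtimes\mathbb{G}_m$-action on $M_P$, use the $\mathbb{G}_m$-flow to reduce to showing that every $\mathbb{G}_m$-fixed point of $D$ has $z$ in the closure of its $W$-orbit, and verify that reduced statement by unpacking the formula of Remark \ref{explicitaction}.

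First I would check that $D_j$ is preserved by both groups: the $W$-action preserves $U=W\cdot o$, and the $\mathbb{G}_m$-action preserves $U$ too, since it fixes $o$ and normalizes $W$ via $\lambda g_v\lambda^{-1}=g_{\lambda v}$; both therefore preserve $D=M_P\setminus U$, and being connected they cannot permute its finitely many irreducible components. Next, pick any closed point $y\in D_j$ and set $y_\infty:=\lim_{\lambda\to\infty}\lambda\cdot y$, which exists by projectivity of $M_P$, is $\mathbb{G}_m$-fixed, and lies in $D_j$ by closedness and $\mathbb{G}_m$-invariance. By Lemma \ref{multiAction}, $y_\infty\in M_i$ for some $i$; since $M_0=\{o\}\subset U$, we have $i\geq 1$, and if $i=r$ then $y_\infty=z\in D_j$ and we are done.

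The main step is the following key claim: for every $\mathbb{G}_m$-fixed point $x\in M_i$ with $1\leq i\leq r-1$, one has $z\in\overline{W\cdot x}$. To prove it, write $x=[0{:}\cdots{:}0{:}f^i{:}0{:}\cdots{:}0]$ with $0\neq f^i\in (F_P^i)^*$ in slot $i$; applying Remark \ref{explicitaction} with $t=w=0$ and $f^\ell=0$ for $\ell\neq i$ collapses the formula to
\[
g_{\lambda v}\cdot x=\bigl[0{:}\cdots{:}0{:}f^i{:}\lambda C_{i+1}^i\,f^i\circ\iota_v{:}\cdots{:}\lambda^{r-i}C_r^i\,f^i\circ\iota_v^{r-i}\bigr].
\]
By definition $F_P^i=\langle d^{r-i}P\rangle$ is spanned by $\{\iota_v^{r-i}P:v\in W\}$ inside $\operatorname{Sym}^iW^*$; non-vanishing of $f^i$ on $F_P^i$ therefore forces $f^i(\iota_v^{r-i}P)\neq 0$ on a non-empty open subset of $W$. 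For such a $v$ the last coordinate dominates as $\lambda\to\infty$ and $g_{\lambda v}\cdot x\to z$ in $\mathbb{P}V_P$.

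Applying the key claim to $x=y_\infty$ and using $W$-invariance of $D_j$ yields $W\cdot y_\infty\subset D_j$, so $z\in\overline{W\cdot y_\infty}\subset D_j$, as desired. The only piece of real content is the spanning statement $\langle\iota_v^{r-i}P:v\in W\rangle=F_P^i$ used in the key claim, which is essentially the definition of $F_P^i$; I anticipate no substantive obstacle beyond careful bookkeeping with the action formula.
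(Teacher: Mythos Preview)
Your proof is correct and follows essentially the same route as the paper: both use $W$-invariance of each $D_j$ together with the explicit action formula of Remark \ref{explicitaction} to show that for a suitable point $p\in D_j$ and generic $v\in W$ the curve $t\mapsto g_{tv}\cdot p$ limits to $z$, the key input being that $F_P^j$ is spanned by the contractions $\iota_v^{r-j}P$. The only difference is cosmetic: the paper works directly with an arbitrary $p=[0{:}\cdots{:}0{:}f^j{:}\cdots{:}f^r]\in D_j$ (the leading nonzero block already gives the dominant power of $t$ in the last coordinate), whereas you first flow by $\mathbb{G}_m$ to reduce to a fixed point with a single nonzero block---an unnecessary but harmless simplification.
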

	\begin{proof}
		From \cite[Proposition 2.3, Theorem 2.5]{hassett1999geometry}, we know that the action $W$ on boundary divisor $D$ stabilizes all its irreducible components. Let $D_1$ be an irreducible component of $D$, 
		and let $p = [0:\cdots:0:f^j:\cdots:f^r]$ be a point of $D_1$ where each $f^i (j\leq i\leq r)$ is viewed as an element in $(F^i_P)^*$ and $f^j \neq 0$. Then $j\geq 1$.
		
		For $v\in W$, define 
		\[\mathcal{L}_v^p = \overline{\{g_{tv}\cdot p\mid t\in \mathbb{C}\}}.
		\]
		Therefore, $\mathcal{L}_v^p$ is a rational curve and $\mathcal{L}_v^p \subset D_1$ for any vector $v\in W$.
		
		From the explicit expression of the action of $W$ on $\mathbb{P}V_{P}$ (see Remark \ref{explicitaction}), we know that
		\[
		g_{tv}\cdot p= [0:\cdots:0:g_{tv}^p\cdot f^{j}:\cdots:g_{tv}^p\cdot f^r],
		\]
		where for each $j\leq i\leq r$,
		\[
		g_{tv}^p\cdot f^i = \sum_{\ell=j}^iC_i^{\ell} f^{\ell}\circ \iota_{tv}^{i-\ell} = \sum_{\ell=j}^it^{i-\ell}C_i^{\ell} f^{\ell}\circ \iota_{v}^{i-\ell} .
		\]
		Since $F^j_P$ consists of all $(r-j)$-th derivatives of $P$ and $j\geq 1$, for general $v\in W$, $f^j\circ \iota_{v}^{r-j} \neq 0$.
		Hence $g_{tv}^p\cdot f^r\neq 0$ and the rational curve $\mathcal{L}_v^p$ contains the terminal point $z$, then we have $z \in \mathcal{L}_v^p \subset D_1$.
	\end{proof}
	\begin{rem}
		\begin{itemize}
			\item For a non-reduced polynomial $P$, this  proposition also holds.
			\item For general homogeneous polynomial $P$, the smoothness of the Euler-symmetric variety $M_P$ only depends on the point $z$. That is to say, the point $z$ is 'the most singular' point of $M_P$, and if the point $z$ is smooth then $M_P$ is smooth variety.
		\end{itemize}
	\end{rem}

	\section{Proof of Theorem \ref{mainthm}}
	Note that Theorem \ref{cartan} is only applicable for general points, then \textit{a prior} we do not know if $\textbf{F}_z$ is also a symbol system. \cite[Theorem 3.3]{ein2023vanishing} is essential to obtain that $\textbf{F}_z$ is actually a symbol system. For the reader's convenience we restate \cite[Theorem 3.3]{ein2023vanishing} as follows.
	
	Let $X\subset \mathbb{P}^N$ be a projective variety and $L=\mathcal{O}_X(1)$ be the restriction of hyperplane line bundle of $\mathbb{P}^N$ on $X$. We denote $J^p(L)$ the sheaf of $p$-jets of $L$. We have a natural short exact sequence
	\[0\rightarrow \operatorname{Sym}^{p}T^*X\otimes L\rightarrow J^{p}(L)\stackrel{\pi_{p-1}^{p}}\rightarrow J^{p-1}(L)\rightarrow 0,\]
	where $T^*X$ is the cotangent sheaf of $X$ and $\pi_{p-1}^p$ is the truncation map. Let \[\alpha_i: H^0(\mathbb{P}^N, \mathcal{O}_{\mathbb{P}^N}(1))\otimes \mathcal{O}_{X}\rightarrow J^i(L)\]
	be the Taylor series map by taking terms with order $\leq i$ in the Taylor series of a global section of $L$.
	Define $\mathbf{U}_k$ to be the maximal open subset of $X$ contained in the smooth locus of $X$ such that the quotient sheaf $J^i(L)/\operatorname{Im}(\alpha_{i})$ is locally free of constant rank over $\mathbf{U}_k$ for all $i\leq k$. And the kernel of the restriction of the truncation map $\pi_{p-1}^p: \operatorname{Im}(\alpha_p)\to \operatorname{Im}(\alpha_{p-1})$ is the $p$-th fundamental form. \cite[Theorem 3.3]{ein2023vanishing} can be interpreted as
	\begin{thm}\label{symbolsys}
		The system of fundamental forms $\mathbf{F}_x^{\leq k}:= \oplus_{i=0}^k F_x^i$ is a symbol system for any $x\in \mathbf{U}_k$.
	\end{thm}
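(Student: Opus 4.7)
The plan is to verify the prolongation condition $F^{p+1}_x \subset \textbf{prolong}(F^p_x)$ for each $0 \leq p < k$; the remaining axioms of a symbol system are either trivial ($F^0_x = \mathbb{C}$) or follow from identifying $F^1_x$ with $T^*_xX$ at a smooth point via the Taylor map $\alpha_1$. The essential content is thus the prolongation inclusion.

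To prove it, fix $\phi \in F^{p+1}_x$ and unwind the definition: there exists a global section $s \in H^0(\mathbb{P}^N, \mathcal{O}_{\mathbb{P}^N}(1))$ whose $p$-jet vanishes at $x$ and whose $(p+1)$-jet $j^{p+1}_x(s) \in J^{p+1}(L)_x$ maps, via the inclusion $\operatorname{Sym}^{p+1}T^*X \otimes L \hookrightarrow J^{p+1}(L)$, to $\phi \in \operatorname{Sym}^{p+1}T^*_xX \otimes L_x$. Now consider the local section $\sigma(y) := j^p_y(s)$ of the subsheaf $\operatorname{Im}(\alpha_p) \subset J^p(L)$ for $y$ near $x$. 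Since $x \in \mathbf{U}_k$ and $p < k$, the sheaf $\operatorname{Im}(\alpha_p)$ is locally free of constant rank in a neighborhood of $x$, hence a subbundle of $J^p(L)$ there, and $\sigma(x) = 0$.

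The key step is this: because $\sigma$ is a section of a vector subbundle vanishing at $x$, its derivative $D_w\sigma|_x$ in a tangent direction $w \in T_xX$ is an intrinsically defined element of the fiber $\operatorname{Im}(\alpha_p)_x$. A direct Taylor-expansion computation in local analytic coordinates $(z_1,\ldots,z_n)$ centered at $x$ --- writing $s = \phi(z) + O(|z|^{p+2})$, forming $j^p_{tw}(s)$, and extracting the coefficient of $t$ --- shows $D_w\sigma|_x = (p+1)\,\iota_w\phi$, viewed as an element of $\ker(\pi^p_{p-1})_x = \operatorname{Sym}^p T^*_xX \otimes L_x$. Therefore $\iota_w\phi$ lies in $\operatorname{Im}(\alpha_p)_x \cap \ker(\pi^p_{p-1})_x = F^p_x$, which is precisely the prolongation condition.

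The main obstacle is justifying the intrinsic nature of the derivative step. Without the constant-rank hypothesis on $\operatorname{Im}(\alpha_p)$, differentiating a section that happens to lie in $\operatorname{Im}(\alpha_p) \subset J^p(L)$ can produce a first-order variation outside $\operatorname{Im}(\alpha_p)_x$, and the argument collapses; this is exactly why the hypothesis $x \in \mathbf{U}_k$ is essential and why the conclusion is restricted to $\mathbf{F}^{\leq k}_x$ rather than to all fundamental forms at an arbitrary point. Making the derivative computation coordinate-free amounts to invoking the canonical flat connection on $J^p(L)|_{\text{local}}$ coming from the $\mathbb{P}^N$-ambient trivialization of sections by $H^0(\mathbb{P}^N, \mathcal{O}(1))$, and restricting this connection to the subbundle $\operatorname{Im}(\alpha_p)$; the Taylor formula above is that connection written in coordinates. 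Once this bundle-theoretic framework is in place, the verification of $F^{p+1}_x \subset \textbf{prolong}(F^p_x)$ is a single derivative computation and the rest is bookkeeping.
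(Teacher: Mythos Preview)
The paper does not supply its own proof of this theorem: it is quoted verbatim as a restatement of \cite[Theorem 3.3]{ein2023vanishing} and used as a black box. So there is no in-paper argument to compare against. Your proposal is an independent proof, and it is correct. The crux---that on $\mathbf{U}_k$ the quotient $J^p(L)/\operatorname{Im}(\alpha_p)$ is locally free, hence $\operatorname{Im}(\alpha_p)$ is a genuine subbundle of $J^p(L)$, so the first-order variation at $x$ of the vanishing section $y\mapsto j^p_y(s)$ lands in the fibre $\operatorname{Im}(\alpha_p)_x$---is exactly the mechanism one expects, and your local Taylor computation identifying that variation with $(p+1)\iota_w\phi$ inside $\operatorname{Sym}^pT^*_xX\otimes L_x$ is the standard one. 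You also correctly isolate why the hypothesis $x\in\mathbf{U}_k$ is indispensable: without constant rank of $\operatorname{Im}(\alpha_p)$ near $x$, the derivative of $\sigma$ need not remain in the image subsheaf. This is almost certainly the same argument Ein and Niu give, so while you have not reproduced the paper's proof (there is none), you have reproduced the cited one.
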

	From this we obtain
	\begin{lem}
		Fix a vector space $W$. Let $P \in \operatorname{Sym}^rW^*$ be a reduced homogeneous polynomial of degree $r$. If the Euler-symmetric variety $M_P$ is smooth, then the system of fundamental forms $\mathbf{F}_z$ is a symbol system.
	\end{lem}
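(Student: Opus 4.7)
The strategy is to reduce the claim to Ein--Niu's criterion (Theorem \ref{symbolsys}) by verifying that $z\in\mathbf{U}_r$. Since $M_P$ is smooth, the sheaf $J^i(L)$ is locally free of rank $\binom{n+i}{i}$ near $z$ for every $i\leq r$, so the only condition to check is that the quotient $J^i(L)/\operatorname{Im}(\alpha_i)$ is locally free in some neighborhood of $z$ for each $i\leq r$.

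Define $\phi_i(x):=\operatorname{rank}(\alpha_i|_x)$, which is the dimension of the $i$-th osculating space of $M_P\subset\mathbb{P}V_P$ at $x$. Because $\alpha_i$ is a morphism between locally free sheaves on the smooth locus of $M_P$, the function $\phi_i$ is lower semi-continuous; hence the set on which $\phi_i$ attains its generic (maximum) value is open and dense. The local freeness of $J^i(L)/\operatorname{Im}(\alpha_i)$ at $z$ is equivalent to $\phi_i$ being locally constant at $z$, which in turn follows as soon as $\phi_i(z)$ equals the generic value. So the whole problem reduces to the dimensional equality $\phi_i(z)=\phi_i(\text{general point})$ for $0\leq i\leq r$.

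Both sides can be computed explicitly. At a general point the system of fundamental forms of $M_P$ agrees with $\mathbf{F}_P$ by Theorem \ref{cartan} combined with part $(3)$ of Theorem \ref{fu20}, so the generic osculating dimension equals $1+m+\sum_{k=2}^{i}\dim F^k_P$. At the terminal point we invoke the local graph parametrization obtained in the proof of Proposition \ref{symmetricFund}: near $z$, $M_P$ is cut out by the equations $z^{(k)}_j=h^k_j(z_1,\ldots,z_m)$ with $h^k_j$ a homogeneous polynomial of degree exactly $k$. Since the truncation of $h^k_j$ at order $i$ is $h^k_j$ itself when $k\leq i$ and zero otherwise, the image of $\alpha_i|_z$ is spanned by the Taylor jets of the coordinate functions and has dimension $1+m+\sum_{k=2}^{i}\dim F^k_z$. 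The comparison $\dim F^k_z=\dim F^k_P$ furnished by Proposition \ref{symmetricFund} then yields $\phi_i(z)=\phi_i(\text{general point})$.

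The main point of the argument is conceptual rather than computational: Ein--Niu's theorem turns the symbol-system property of $\mathbf{F}_z$ into a semi-continuity statement about osculating ranks, and Proposition \ref{symmetricFund} provides exactly the dimensional input this statement requires. The only step that needs some care is confirming that the filtration appearing in the Ein--Niu set-up at $z$ (kernels of the truncation maps on fibers of the jet image) really agrees with the vector spaces $F^k_z$ described in Proposition \ref{symmetricFund}, but this is immediate from the graph parametrization above. Once $z\in\mathbf{U}_r$ is established, Theorem \ref{symbolsys} directly delivers that $\mathbf{F}_z$ is a symbol system.
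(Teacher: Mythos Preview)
Your proposal is correct and follows essentially the same approach as the paper: reduce to showing $z\in\mathbf{U}_r$ via Ein--Niu, then use semi-continuity of the osculating ranks together with the dimensional equalities $\dim F^k_z=\dim F^k_P$ from Proposition~\ref{symmetricFund}. The only cosmetic difference is that the paper phrases the semi-continuity step as a proof by contradiction (assuming $z\in\mathbf{U}_j\setminus\mathbf{U}_{j+1}$ and deriving $\dim F^{j+1}_z<\dim F^{j+1}_x$ from the short exact sequence $0\to F^{j+1}_z\to\operatorname{Im}(\alpha_{j+1})_z\to\operatorname{Im}(\alpha_j)_z\to 0$), whereas you compute $\phi_i(z)$ and $\phi_i(\text{general})$ directly and compare.
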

	\begin{proof}
		From above theorem, we only need to prove that the terminal point $z \in \mathbf{U}_r$. By upper semi-continuity of the rank function of a coherent sheaf (cf.\cite[Chapter III 12.7.2]{hartshorne1979algebraic}), for any point $q \in M_P\backslash \mathbf{U}_r$, there exists $2\leq i\leq r$ such that
		\[
		\operatorname{rank}_q(J^i(L)/\operatorname{Im}(\alpha_{i})) > \operatorname{rank}_x(J^i(L)/\operatorname{Im}(\alpha_{i})),
		\]
		for $x \in \mathbf{U}_r$.
		Suppose that $z \notin \mathbf{U}_r$, then $z \in \mathbf{U}_j\backslash \mathbf{U}_{j+1}$ for some $1\leq j\leq r-1$. Therefore, we have 
		\begin{align*}
			\operatorname{rank}_z(J^{j+1}(L)/\operatorname{Im}(\alpha_{j+1}))& > \operatorname{rank}_x(J^{j+1}(L)/\operatorname{Im}(\alpha_{j+1})); \\ 
			\operatorname{rank}_z(J^k(L)/\operatorname{Im}(\alpha_{k})) &= \operatorname{rank}_x(J^k(L)/\operatorname{Im}(\alpha_{k})), \forall\; k \leq j ,
		\end{align*}
		for $x \in \mathbf{U}_{j}$. Since we have the following exact sequence at point $z$
		\[
		0\to F^{j+1}_z \to \operatorname{Im}(\alpha_{j+1})_z\to \operatorname{Im}(\alpha_{j})_z \to 0,
		\]
		we have $\operatorname{dim}(F^{j+1}_{z}) < \operatorname{dim}(F^{j+1}_{x})$, which contradicts with Proposition \ref{symmetricFund}.
	\end{proof}
	Then we immediately have
	\begin{prop}\label{eulerhomoge}
		Fix a vector space $W$. Let $P \in \operatorname{Sym}^rW^*$ be a reduced homogeneous polynomial of degree $r$. If the Euler-symmetric variety $M_P$ is smooth, then $M_P$ is homogeneous.
	\end{prop}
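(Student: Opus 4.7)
The plan is to apply Theorem \ref{fu20} to the symbol system $\mathbf{F}_z$ (established in the preceding lemma) to produce a second vector group action on $M_P$, and then to combine it with the original action to conclude homogeneity.

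Since $\mathbf{F}_z$ is a symbol system, we may form the Euler-symmetric variety $M(\mathbf{F}_z)\subset \mathbb{P}V_{\mathbf{F}_z}$. To identify $M_P$ with $M(\mathbf{F}_z)$, I would first observe that $z$ is an Euler-type fixed point of the original $\mathbb{G}_m$-action on $M_P$: by Lemma \ref{multiAction} it is an isolated fixed point (since $F_P^r=\langle P\rangle$ is one-dimensional), and a weight computation in the chart from the proof of Proposition \ref{symmetricFund} shows that the induced action on $T_zM_P$ is scalar multiplication of weight $-1$. Applying the converse part of Theorem \ref{fu20} at $z$ then yields a projective isomorphism $M_P\cong M(\mathbf{F}_z)$ carrying $z$ to the base point of $M(\mathbf{F}_z)$, and pulling back the natural $W$-action furnishes a second vector group action $W_z\subset \operatorname{Aut}^0(M_P)$ whose open orbit $U_z:=W_z\cdot z$ is biregular to $W$.

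With both vector group actions available, consider the connected algebraic subgroup $G:=\langle W_o,W_z,\mathbb{G}_m\rangle\subset \operatorname{Aut}^0(M_P)$. A direct computation using the formula in Remark \ref{explicitaction} at $x=z=[0:\cdots:0:1]$ gives $g_v\cdot z=z$ for every $v\in W$, so $W_o$ fixes $z$; together with $\mathbb{G}_m\cdot z=z$ from Lemma \ref{multiAction}, the connected solvable subgroup $B:=W_o\cdot \mathbb{G}_m$ stabilizes $z$. The Euler $\mathbb{G}_m$ acts on $\mathfrak{w}_o$ with weight $+1$ (immediate from Remark \ref{explicitaction}) and on $\mathfrak{w}_z$ with weight $-1$ (by the parallel weight count at $z$), so $W_o$ and $W_z$ sit in $G$ as opposite unipotent subgroups relative to $\mathbb{G}_m$; this opposite-unipotent structure forces $G$ to be reductive and $B$ to contain a Borel subgroup of $G$ stabilizing $z$.

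Given this Borel structure, the orbit $G\cdot z\cong G/\operatorname{Stab}_G(z)$ is a quotient by a parabolic subgroup $P\supset B$, hence a complete, and in particular closed, subvariety of $M_P$; on the other hand it contains the open orbit $U_z=W_z\cdot z$ and is therefore open in $M_P$. A nonempty open-and-closed subset of the connected projective variety $M_P$ must equal $M_P$, so $G$ acts transitively and $M_P$ is homogeneous. The main technical obstacle is justifying the reductive-Borel step: one must analyze the Lie brackets $[\mathfrak{w}_o,\mathfrak{w}_z]$ (which land in the weight-zero part of $\operatorname{Lie}(G)$ under $\mathbb{G}_m$) and match them with a Cartan subalgebra so that $\mathfrak{w}_o$ and $\mathfrak{w}_z$ become opposite root subspaces of a reductive Lie algebra, which is precisely the input needed for the closed-orbit conclusion.
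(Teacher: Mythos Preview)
Your setup of the second vector group action via the symbol system $\mathbf{F}_z$ matches the paper's, but the remainder of your argument diverges and contains a genuine gap. The assertion that the ``opposite-unipotent structure forces $G$ to be reductive and $B$ to contain a Borel'' is not justified, and you yourself flag it as the main obstacle. Having $\mathbb{G}_m$ act with weights $\pm 1$ on two abelian unipotent pieces does not by itself force reductivity: the weight-zero part $[\mathfrak{w}_o,\mathfrak{w}_z]$ need not reduce to the one-dimensional $\mathbb{G}_m$, and nothing prevents further brackets from producing a larger (possibly non-reductive) algebra. Establishing this structure would amount to proving that $M_P$ is a generalized flag variety by other means, which is essentially the content of the theorem.

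The paper avoids all of this. After obtaining the $W^*$-action with open orbit $U_z\ni z$, it invokes Proposition~\ref{DivisorTerminal}, whose proof actually shows that $z$ lies in the closure of the $W$-orbit of \emph{every} boundary point $p\in D$. Hence $U_z$ meets $W\cdot p$, so some $g_v\cdot p\in U_z$; then, since $U_z\cap U$ is nonempty open, a further $W^*$-translate lands in $U$. Thus the group generated by $W$ and $W^*$ already acts transitively, with no appeal to reductivity or Borel subgroups. You never use Proposition~\ref{DivisorTerminal}, and that omission is exactly what forces you into the unproved Lie-theoretic step; replacing your closed-orbit argument with this translation argument would close the gap.
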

	\begin{proof}
		Consider the rational map $\Psi_{\textbf{F}_z}$ defined by the system of fundamental forms of $M_P$ at $z$:
		\[
		\Psi_{\textbf{F}_z}: \mathbb{P}(\mathbb{C}\oplus W^*)\dashrightarrow \mathbb{P}((F_z^r)^*\oplus \cdots \oplus (F_z^2)^*\oplus W^* \oplus \mathbb{C}),
		\]
		$$
		[t:w] \mapsto [\iota^r_w : t\iota^{r-1}_w : \cdots: t^{r-2}\iota^2_w: t^{r-1}w: t^r].
		$$
		In terms of the natural identification of $\mathbb{P}((F_z^r)^*\oplus \cdots \oplus (F_z^2)^*\oplus W^* \oplus \mathbb{C})$ and $\mathbb{P}(V_{P})$, $M_P = \operatorname{Im}(\Psi_{\textbf{F}_z})$. Since $\textbf{F}_z$ is a symbol system, the $W^*$-action on $\mathbb{P}(\mathbb{C}\oplus W^*)$ can be extended to $M_P$, making the orbit of $z$ is biregular to $W^*$. Let $U_z$ denote the open orbit of $z$. 
		By Proposition \ref{DivisorTerminal}, we know that for any point $p\in D$ in the boundary divisor, there exists a vector $v\in W$ such that $g_v \cdot p \in U_z$. 
		
		Since $U_z \cap U \subset M_P$ is also an open subset of $M_P$, there exists a vector $w \in W^*$ such that $g_w \cdot (g_v \cdot p) \in U$. Hence $M_P$ is homogeneous under the algebraic group generated by $W$ and $W^*$ in $GL(V_P)$.
	\end{proof}
	
	\begin{prop}\label{PicVMRT}
		Fix a vector space $W$. Let $P \in \operatorname{Sym}^rW^*$ be a reduced homogeneous polynomial of degree $r$. 
		If the Euler-symmetric variety $M_P$ is smooth, then the Picard number $\rho(M_P)$ equals the number of irreducible components of $M_1$, namely,
		\[
		\rho(M_P) = \#_{irr}M_1,
		\]
		where $\#_{irr}Y$ is the number of irreducible components of $Y$.
	\end{prop}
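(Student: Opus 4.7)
The plan is to identify $\rho(M_P)$ with both the number of irreducible components of the boundary divisor $D$ and the number of irreducible components of $M_1$, via two natural routes: the standard Picard sequence for a smooth equivariant compactification of a vector group, and an analysis of codimension-one cells in the Bia{\l}ynicki-Birula decomposition.

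\emph{Step 1: $\rho(M_P) = \#_{irr}(D)$.}
Since $M_P$ is a smooth equivariant compactification of the vector group $W$ with open orbit $U \cong W$, and $\mathcal{O}_W(W)^* = \mathbb{C}^*$, there is an exact sequence
\[
0 \to \bigoplus_{j} \mathbb{Z}[D_j] \to \operatorname{Pic}(M_P) \to \operatorname{Pic}(U) = 0,
\]
where the left map is injective (any principal divisor supported on $D$ would restrict to a non-constant unit on $W$). Thus $\rho(M_P) = \#_{irr}(D)$.

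\emph{Step 2: $\#_{irr}(D) = \#_{irr}(M_1)$.}
Apply the Bia{\l}ynicki-Birula decomposition (Theorem \ref{BBdecom}) to $M_P$ under the $\mathbb{G}_m$-action. Every irreducible component of $D$ is the closure of a unique codimension-one $+$-cell $\overline{(M_i^{(\alpha)})^+}$, whose codimension equals $\dim(T_yM_P)^-$ for $y\in M_i^{(\alpha)}$. The plan is to set up a bijection between irreducible components of $D$ and of $M_1$ via $M_1^{(\alpha)}\leftrightarrow \overline{(M_1^{(\alpha)})^+}$.
For $y\in M_1^{(\alpha)}$, the ambient negative-weight tangent space $(T_y\mathbb{P}V_P)^-$ is one-dimensional, coming solely from $V_{F_P^0}=\mathbb{C}$ (with relative weight $-1$ at $y$), and this direction lies in $T_yM_P$ because the line in $\mathbb{P}V_P$ from $o$ to $y$ lies on $M_P$ (by Remark \ref{remVMRT}, since $y\in M_1=V(F_P^2)$). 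Hence $\dim(T_yM_P)^- = 1$, and $\overline{(M_1^{(\alpha)})^+}$ is a divisor.
Conversely, for $y\in M_i^{(\alpha)}$ with $i\geq 2$, we must show $\dim(T_yM_P)^-\geq 2$ so that no new divisorial cell arises. Using the homogeneity of $M_P$ (Proposition \ref{eulerhomoge}), the dual vector group $W^*$ acts on $M_P$ with origin $z$ and $\mathbb{G}_m$-weight $-1$; since $y\neq z$, its $W^*$-orbit is nontrivial, giving a direction in $(T_yM_P)^{-1}$. For a second independent direction, use the BB "$-$"-flow into $y$: since $i\geq 2$, at least one orbit flowing to $y$ has source in $M_j^{(\beta)}$ for some $j\leq i-2$, producing a tangent direction of weight $-(i-j)\leq -2$, linearly independent from the weight-$(-1)$ direction above. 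Thus $\dim(T_yM_P)^-\geq 2$.

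Combining these facts yields $\rho(M_P) = \#_{irr}(D) = \#_{irr}(M_1)$.

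\emph{Main obstacle.} The crux is the second half of Step 2: producing the additional, weight-$(\leq -2)$ tangent direction at $y\in M_i^{(\alpha)}$ with $i\geq 2$. The $W^*$-action only accounts for a single weight-$(-1)$ direction, and ruling out $\dim(T_yM_P)^-=1$ requires tracing the BB "$-$"-flow into $y$ back to a strictly lower fixed component, which in turn rests on the incidence structure of BB strata in the homogeneous variety $M_P$ rather than on any formal dimension argument.
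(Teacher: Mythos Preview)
Your Step 1 and the first half of Step 2 (showing $\overline{(M_1^{(\alpha)})^+}$ is a divisor) are fine and match the paper's inequality $\rho(M_P)\geq \#_{irr}M_1$. The second half of Step 2, however, has two genuine gaps.

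First, the weight-$(-1)$ direction. You assert that ``since $y\neq z$, its $W^*$-orbit is nontrivial''. But $y$ lies in the boundary $M_P\setminus U_z$, and for vector-group actions the stabilizer of a boundary point can be the whole group; nothing you have written rules out that $W^*$ fixes $y$. So even the first negative-weight direction is not secured.

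Second, and more seriously, the weight-$(\leq -2)$ direction. Your claim that a $\mathbb{G}_m$-orbit flowing into $y$ with source in $M_j$, $j\leq i-2$, yields a tangent vector at $y$ of weight exactly $-(i-j)$ is false in general. For a linear $\mathbb{G}_m$-action on projective space the weight of the tangent direction of an orbit at its sink is \emph{not} determined by the difference of the strata indices of source and sink (e.g.\ on $\mathbb{P}^2$ with weights $0,1,2$, the conic $b^2=ac$ connects $M_0$ to $M_2$ but has tangent weight $-1$ at the sink). So even granting the existence of such a flow, you cannot separate it from the weight-$(-1)$ directions coming from $W^*$; and you have not shown such a flow exists at all.

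The paper sidesteps this weight bookkeeping entirely. Assuming a divisorial cell $D_1=\overline{K^+}$ with $K\subset M_j$, $j\geq 2$, it invokes Proposition~\ref{DivisorTerminal} to get $z\in D_1$ and then bounds $D_1$ inside $L_z^{r-j+1}$, the locus swept by $\mathbb{G}_m$-stable curves through $z$ whose tangent at $z$ lies in $V(F_z^{r-j+1})$. Using $F_z^{r-1}=\langle dP_*\rangle$ (Proposition~\ref{derivative}) and the reducedness of $P_*$, one has $\dim V(F_z^{r-1})\leq n-3$, whence $\dim L_z^{r-j+1}\leq n-2$, contradicting that $D_1$ is a divisor. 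This is a direct dimension argument that uses the specific description of $F_z^{r-1}$ rather than any abstract incidence of BB strata, and it does not rely on the homogeneity established in Proposition~\ref{eulerhomoge}.
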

	\begin{proof}
		From the proof of Proposition \ref{dualdef} and \cite[Theorem 2.5]{hassett1999geometry}, we have 
		\[
		\rho(M_P) \geq \#_{irr} M_1.
		\]
		If $\rho(M_P) \neq \#_{irr} M_1$, then $\overline{M_1^+} \subsetneq D$, and there exist an irreducible component $D_1$ of $D$ and an irreducible component $K$ of $M_j$, for some $2\leq j\leq r-1$, such that $D_1 = \overline{K^+}$.
		
		Let $L_z^k$ be the closure of the union of all $\mathbb{G}_{m}$-stable rational curves through $z$ such that the tangent direction at $z$ lies in $V(F_z^k)\subset \mathbb{P}T_zM_P$. Therefore, $D_1 \subset L_z^{r-j+1}$ by Proposition \ref{DivisorTerminal}. But we have 
		\[
		\operatorname{dim}(L_z^{r-j+1}) \leq \operatorname{dim}(V(F_z^{r-j+1}))+1\leq \operatorname{dim}(V(F_z^{r-1}))+1\leq n-2,
		\]
		which contradicts with the fact that $D_1$ is a divisor.
	\end{proof}
	\begin{rem}\label{CounterPicVMRT}
		Consider the negative Bia{\l}ynicki-Birula decomposition for the $\mathbb{G}_m$-action $M_P = \coprod_{i=0}^rM_i^-$.  
		we have the similar results as follows:
		\begin{enumerate}
			\item $M_r =  z$, the terminal point;
			\item $M_r^-$ is biregular to an affine space of dimension $n$, and the boundary divisor $D_z = \coprod_{i=0}^{r-1}M_i^- = \overline{M^-_{r-1}}$;
			\item $M_{r-1}\subset \mathbb{P}V_{F^{r-1}}$ is also the dual variety of $V(P)\subset \mathbb{P}T_oM_P$;
			\item all the irreducible component of $D_z$ contain the original point $o$;
			\item $\rho(M_P) = \#_{irr}M_{r-1}$.
		\end{enumerate}  
	\end{rem}
	\begin{cor}
		Assumption as above, then we have
		\[
		\rho(M_P) = \#_{irr}V(P) = \#_{irr}V(P_*).
		\]
	\end{cor}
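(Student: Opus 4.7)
The plan is to string together the two computations of the Picard number that have already been established and then translate each side into a count of irreducible factors of $P$ or $P_*$ via biduality. Concretely, Proposition \ref{PicVMRT} gives $\rho(M_P)=\#_{irr}M_1$ while part (5) of Remark \ref{CounterPicVMRT} gives $\rho(M_P)=\#_{irr}M_{r-1}$, so it suffices to show
\[
\#_{irr}M_1 = \#_{irr}V(P_*) \qquad \text{and} \qquad \#_{irr}M_{r-1} = \#_{irr}V(P).
\]

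To handle the first equality, I would use Proposition \ref{dualdef}, which identifies $M_1=Z$ with the dual variety of the reduced hypersurface $V(P_*)\subset \mathbb{P}T_zM_P$ and moreover records that $M_1$ is not dual defective. Writing the factorization $P_*=Q_1\cdots Q_k$ into distinct irreducible factors (available because $P_*$ is reduced, as noted inside the proof of Proposition \ref{VMRT}), one has $V(P_*)=\bigcup_{i} V(Q_i)$ with $V(Q_i)$ irreducible, and the dual of a union is the union of duals, so $M_1=\bigcup_i V(Q_i)^{*}$ with each $V(Q_i)^{*}$ irreducible. The key point is that these dual components are pairwise distinct: by the biduality theorem $V(Q_i)^{**}=V(Q_i)$, so $V(Q_i)^{*}=V(Q_j)^{*}$ would force $V(Q_i)=V(Q_j)$, contradicting $i\neq j$. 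Hence $\#_{irr}M_1=k=\#_{irr}V(P_*)$.

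For the second equality, I would repeat the argument verbatim with the roles of $o$ and $z$ exchanged. Part (3) of Remark \ref{CounterPicVMRT} records that $M_{r-1}\subset \mathbb{P}V_{F^{r-1}}$ is the dual variety of $V(P)\subset \mathbb{P}T_oM_P$, which is a reduced hypersurface by hypothesis; the same decomposition-and-biduality argument then yields $\#_{irr}M_{r-1}=\#_{irr}V(P)$. Combining the three equalities gives the corollary.

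The one step that needs a small amount of care is the pairwise distinctness of the dual components $V(Q_i)^{*}$; this is where I would invoke biduality. A secondary point is that Remark \ref{CounterPicVMRT} is stated without a separate proof, so in a fully written-out version one would want to justify its ingredients, but since the remark may be cited by assumption here this is not an obstacle.
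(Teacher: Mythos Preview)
Your proposal is correct and reaches the corollary, but the route differs from the paper's. You pass through $M_1$ and $M_{r-1}$: Proposition~\ref{PicVMRT} and Remark~\ref{CounterPicVMRT}(5) give $\rho(M_P)=\#_{irr}M_1=\#_{irr}M_{r-1}$, and then you identify $M_1$ (resp.\ $M_{r-1}$) with the dual of $V(P_*)$ (resp.\ $V(P)$) via Proposition~\ref{dualdef} and Remark~\ref{CounterPicVMRT}(3), invoking biduality to conclude that dualizing preserves the number of irreducible components. The paper instead passes through the boundary divisors: it writes $\rho(M_P)=\#_{irr}D=\#_{irr}D_z$, identifies $D=L_z^r$ and $D_z=L_o^r$, and then observes that $L_o^r$ is the closure of a $\mathbb{G}_m$-fibration over $\operatorname{Sm}(V(P))$ (and likewise $L_z^r$ over $\operatorname{Sm}(V(P_*))$), so the component count transfers directly without any appeal to projective duality theory. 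Your argument is a bit more conceptual and avoids the fibration bookkeeping; the paper's argument is more self-contained in that it does not need the biduality theorem as an external input. Either way the essential inputs (Proposition~\ref{PicVMRT}, Proposition~\ref{dualdef}, Remark~\ref{CounterPicVMRT}) are the same, so the difference is one of packaging rather than substance.
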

	\begin{proof}
		Since $M_0^+$ and $M_r^-$ are affine open subvariety which is biregular to affine space, then
		\[
		\#_{irr} D = \rho(M_P) = \#_{irr}D_z.
		\]
		From Theorem \ref{PicVMRT} and Remark \ref{CounterPicVMRT}, we know that $L_o^r = D_z$, $L_z^r = D$. From definition, $L_o^r$ is also the closure of the $\mathbb{G}_m$-fibration $\widetilde{L_o^r}$:
		\[
		\xymatrix{
			\widetilde{L_o^r} \ar@{^{(}->}[r] \ar[d] & M_0^+\backslash o \ar[d] \\ 
			\operatorname{Sm}(V(P)) \ar@{^{(}->}[r]& \mathbb{P}T_oM_P.
		}
		\]
		Therefore, $\#_{irr}L^r_o = \#_{irr}\widetilde{L_o^r} =  \#_{irr}\operatorname{Sm}(V(P)) =  \#_{irr}V(P)$.
		Similarly, $ \#_{irr}L^r_z =  \#_{irr}V(P_*)$. 
	\end{proof}
	
	Now we can finish the proof of Theorem \ref{mainthm}.
	\begin{proof}
		From Proposition \ref{eulerhomoge}, $M_P$ is homogeneous. Since Euler-symmetric varieties are rational, 
		hence $M_P$ is rational homogeneous and it is actually a Hermitian symmetric space as it is an equivariant compactification of a vector group (see \cite{Arz2011equiva}). As $P$ is reduced, $V(P)$ is reduced. $M_P$ is a product of irreducible compact Hermitian symmetric spaces, then $M_1$ is the disjoint union of the VMRTs of each factors. Since $M_1$ is not dual defect, we know each factor of $M_P$ must be isomorphic to an irreducible compact Hermitian symmetric spaces listed in Theorem \ref{mainthm}. The embedding is obvious.
		
		For the converse part, it can be deduced directly from the irreducible case, which is well-known (see for example \cite{landsberg2002construction}). 
	\end{proof}
	\begin{rem}
		Even if the polynomial $P$ is non-reduced, the Proposition \ref{eulerhomoge} still holds. Consequently, we can relax the requirement that $P$ is reduced in Theorem \ref{mainthm}. However, this only allows us to conclude that $M_P$ can be factored as a product of those  irreducible Hermitian symmetric spaces listed in Theorem \ref{mainthm}, without specifying the exact embedding.
	\end{rem}
	
	
	\section*{Acknowledgement}
	The authors would like to thank Jun-Muk Hwang and Baohua Fu for some useful suggestions and comments. The second author would also like to thank Wenbo Niu for introducing his work and helpful discussions. The first author was supported by a start-up funding of Shenzhen University (000001032064). The second author was supported by the Institute for Basic Science (IBS-R032-D1-2024-a00).
\bibliographystyle{alpha}
\bibliography{ref}
\end{document}